\renewcommand{\empty}{\varnothing}
\newcommand{\eps}{\varepsilon}
\newcommand{\comp}{\vDash}
\newcommand{\til}{\widetilde}
\newcommand{\rev}[1]{\reflectbox{\ensuremath{\vec{\reflectbox{\ensuremath{#1}}}}}} 
\newcommand{\f}{\frac}
\newcommand{\A}{\[\begin{aligned}}
\newcommand{\B}{\end{aligned}\]}
\newcommand{\Zpos}{\mathbb{Z}_{>0}}
\newcommand{\Comp}{\mathrm{Comp}}
\newcommand{\Compne}{\mathrm{Comp}_{>0}}
\newcommand{\Sym}{\mathrm{Sym}}
\newcommand{\NSym}{\mathrm{NSym}}
\newcommand{\QSym}{\mathrm{QSym}}
\newcommand{\Sh}{\mathrm{Sh}}
\newcommand{\sh}{\shuffle}
\renewcommand{\a}{\alpha}
\renewcommand{\b}{\beta}
\newcommand{\y}{\gamma}
\newcommand{\tensor}{\otimes}
\newcommand{\bk}{\mathbbm{k}}
\newcommand{\bX}{\mathbb{X}}
\DeclareMathOperator{\p}{prod}
\DeclareMathOperator{\aut}{aut}
\DeclareMathOperator{\even}{e}
\DeclareMathOperator{\odd}{o}
\DeclareMathOperator{\lp}{lp}
\newtheorem{thm}{Theorem}[section]
\newtheorem{prop}[thm]{Proposition}
\newtheorem{lemma}[thm]{Lemma}
\theoremstyle{definition}
\newtheorem{example}[thm]{Example}
\newtheorem{defn}[thm]{Definition}
\theoremstyle{remark}
\newtheorem{remark}[thm]{Remark}
\newcommand{\ph}{\varphi}
\renewcommand{\angle}[1]{\langle#1\rangle}
\numberwithin{equation}{section}
\title{Shuffle bases and quasisymmetric power sums}
\author{Ricky Ini Liu}
\address{Department of Mathematics, University of Washington, Seattle, WA 98195}
\email{riliu@uw.edu}
\author{Michael Tang}
\address{Department of Mathematics, University of Washington, Seattle, WA 98195}
\email{mst0@uw.edu}
\date{\today}
\begin{document}
\maketitle

\begin{abstract}
    The algebra of quasisymmetric functions $\QSym$ and the shuffle algebra of compositions $\Sh$ are isomorphic as graded Hopf algebras (in characteristic zero), and isomorphisms between them can be specified via \emph{shuffle bases} of $\QSym$. We use the notion of \emph{infinitesimal characters} to characterize shuffle bases, and we establish a universal property for $\Sh$ in the category of connected graded Hopf algebras equipped with an infinitesimal character, analogous to the universal property of $\QSym$ as a combinatorial Hopf algebra described by Aguiar, Bergeron, and Sottile.
        We then use these results to give general constructions for quasisymmetric power sums, recovering four previous constructions from the literature, and study their properties.
\end{abstract}

\section{Introduction}

The shuffle algebra $\Sh$ is a connected graded Hopf algebra with a graded basis $\{x_\a\}$ indexed by compositions $\a,$ where multiplication is given by the shuffle of compositions, $x_\a x_\b = \sum_{\y \in \a \sh \b} x_\y,$ and comultiplication is given by deconcatenation, $x_\y = \sum_{\a \b = \y} x_\a \tensor x_\b.$ In characteristic zero, it is known that $\Sh$ is isomorphic as a graded Hopf algebra to the more well-known algebra $\QSym$ of quasisymmetric functions (see \cite[Exercise 5.4.12]{grinberg} or \cite[Theorem 2.1]{malvenuto}). In this paper, we study the structure of $\Sh$ and, in particular, its relationship with $\QSym$.

The algebra $\QSym$ is ubiquitous in that many types of combinatorial objects (posets, graphs, matroids, etc.)\ are naturally associated with certain quasisymmetric invariants. Aguiar, Bergeron, and Sottile \cite{abs} explain this phenomenon by describing a universal property of $\QSym$. Specifically, they show that for a certain character $\zeta_Q \colon \QSym \to \bk$ (where $\bk$ is the base field), the pair $(\QSym, \zeta_Q)$ is the terminal object in the category of \emph{combinatorial Hopf algebras} (Hopf algebras equipped with distinguished characters). Quasisymmetric invariants then arise naturally by applying the unique Hopf morphism to $(\QSym, \zeta_Q)$ from a combinatorial Hopf algebra of interest.

To study the relationship between $\Sh$ and $\QSym$, we first establish an analogous universal property of $\Sh$. In Section~\ref{sec:infchars}, we define the category of \emph{infinitorial Hopf algebras}, which are Hopf algebras equipped with distinguished \emph{infinitesimal characters}. Then we show that, for a certain infinitesimal character $\xi_S$ of $\Sh$, the pair $(\Sh, \xi_S)$ is the terminal object in this category. Using both universal properties, we demonstrate how isomorphisms between $\Sh$ and $\QSym$ give rise to bijections between the group $\bX(H)$ of characters and the Lie algebra $\Xi(H)$ of infinitesimal characters of any connected graded Hopf algebra $H$.

Next, in Section~\ref{sec:shuffle-bases}, we turn to the study of the isomorphisms between $\Sh$ and $\QSym$ themselves. Under such an isomorphism, the basis $\{x_\a\}$ is sent to a graded basis $\{X_\a\}$ of $\QSym$ with the same multiplicative and comultiplicative structure; we call such a basis a \emph{shuffle basis}. Again using the universal properties of $\Sh$ and $\QSym$, we give characterizations of shuffle bases in terms of characters of $\Sh$ and infinitesimal characters of $\QSym$.

In Section~\ref{sec:shuffle-qps}, we demonstrate how shuffle bases are closely related (in fact, nearly equivalent) to another class of graded bases for $\QSym$ called \emph{quasisymmetric power sum} (QPS) bases. Quasisymmetric power sums are quasisymmetric analogs and refinements of the symmetric power sum basis $\{p_\lambda\},$ which arise naturally by duality from the noncommutative power sums (see \cite{gelfand}, as well as \cite{as,bdhmn,liuweselcouch} for some combinatorial applications). We use our characterizations of shuffle bases to give general constructions for QPS bases and, in doing so, recover several bases that have been studied previously. Namely, using a construction we call the ``prefix sum construction,'' we recover the so-called ``type I'' and ``type II'' quasisymmetric power sums, whose duals were introduced in \cite{gelfand} and which were further studied in \cite{bdhmn}. Then, using a construction we call the ``ordered partition construction,'' we recover two more QPS bases that are less well-known yet still exhibit interesting properties: the ``shuffle'' basis of \cite{al} (which we will instead call the ``even-odd'' basis), and the ``combinatorial'' basis defined in \cite{awvw} and \cite{lazzeroni}.

In Section~\ref{sec:qps-properties}, we give additional properties and characterizations of these QPS bases. We rederive the expansions of the monomial basis $\{M_\a\}$ of $\QSym$ in the type I and type II bases, and we show how the type II basis gives rise to the well-known \emph{exponential map} from $\Xi(H)$ to $\bX(H)$. We give a family of QPS bases that generalizes the aforementioned even-odd basis and which are all eigenbases for the \emph{theta map} $\Theta \colon \QSym \to \QSym.$ Finally, we determine all QPS bases which expand into the monomial basis with nonnegative integer coefficients.

\section{Preliminaries} \label{prelim}

Throughout this paper, we fix a field $\bk$, which is understood to be the base field for all vector spaces.

\subsection{Compositions} \label{compositions}

A \emph{composition} is a finite sequence $\a = (\a_1, \dots, \a_\ell)$ of positive integers; each $\a_i$ is called a \emph{part} of $\a.$
The \emph{size} of $\a$ is $|\a| = \a_1 + \dots + \a_\ell.$ If $|\a| = n,$ we say that $\a$ is a \emph{composition of $n$} and write $\a \comp n.$ The \emph{length} of a composition $\a,$ denoted $\ell(\a),$ is the number of its parts. For each positive integer $i \in \Zpos,$ we define $m_i(\a)$ to be the number of times $i$ appears as a part of $\a$.

We denote by $\empty$ the unique composition with no parts; thus $|\empty| = \ell(\empty) = 0.$ Additionally, we will often identify positive integers $n$ with the corresponding composition $(n)$ of length $1.$ We denote the set of all compositions by $\Comp,$ the set of nonempty compositions by $\Compne = \Comp \setminus \{\empty\},$ and the set of compositions of $n \ge 0$ by $\Comp_n.$

Given a composition $\a = (\a_1, \dots, \a_\ell),$ its \emph{reverse} is the composition $\rev{\a} = (\a_\ell, \dots, \a_1)$ of the same size and length as $\a$. We say a composition is \emph{even} (resp.\ odd) if all of its parts are even (resp.\ odd).

A \emph{partition} is a composition $\a = (\a_1, \dots, \a_\ell)$ whose parts are weakly decreasing, that is, $\a_1 \ge \ldots \ge \a_\ell.$ For compositions $\a$ and $\b,$ we write $\a \sim \b$ if $\a$ can be obtained by rearranging the parts of $\b$ (or equivalently, $m_i(\a) = m_i(\b)$ for all $i$). Thus, for each composition $\a,$ there is a unique partition $\lambda$ such that $\a \sim \lambda$; we denote this partition by $\tilde{\a}.$

The \emph{concatenation} of two compositions $\a = (\a_1, \dots, \a_k)$ and $\b = (\b_1, \dots, \b_\ell)$ is the composition $\a\b = (\a_1, \dots, \a_k, \b_1, \dots, \b_\ell).$ When $|\a| = |\b|,$ we say that $\a$ \emph{refines} $\b$ (and $\b$ \emph{coarsens} $\a$), denoted $\a \le \b$, if $\a$ can be written as a concatenation
\begin{equation} \label{eq:refinement}
    \a = \a^{(1)} \dotsm \a^{(\ell(\b))} \qquad \text{where } \a^{(i)} \comp \b_i \text{ for each } i.
\end{equation} (When $\a \le \b,$ we will typically use the notation $\a^{(i)}$ as above without further comment.) This defines a partial order $\le$ on $\Comp_n$ for each $n.$

\subsubsection{Functions on compositions} \label{sec:compfunctions}

We define the following functions on compositions.

\begin{defn}
    For a composition $\a = (\a_1, \dots, \a_\ell)$, define:
    \begin{enumerate}[(i)]
        \item $\lp(\a) = \a_\ell,$ the last part of $\a$;
        \item $\p(\a) = \a_1 \dotsm \a_\ell,$ the product of the parts of $\a$;
        \item $\aut(\a) = \prod_{i \ge 1} m_i(\a)!$; and
        \item $z_\a = \p(\a) \aut(\a).$
    \end{enumerate}
    
    (By convention, $\lp(\empty) = 0$ and $\p(\empty) = \aut(\empty) = z_\empty = 1.$)
\end{defn}

Thus, $\aut(\a)$ is the number of ways to permute equal parts of $\a.$ It follows from the orbit-stabilizer theorem that the number of compositions $\b$ with $\a \sim \b$ is $\ell(\a)! / \aut(\a),$ and that the number of permutations on $|\a|$ elements with cycle type $\tilde{\alpha}$ is $|\a|! / z_\a.$

Given a function on compositions, we can extend it to a function on a pair of compositions $\alpha \le \beta$ in the following way.
\begin{defn} \label{def:f-alphabeta}
    Let $f \colon \Compne \to \bk$ be a function. For compositions $\a$ and $\b$ with $\a \le \b,$ we define \A
        f(\a, \b) = f(\a^{(1)}) \dotsm f(\a^{(\ell(\b))}),
    \B where $\a^{(1)}, \dots, \a^{(\ell(\b))}$ are given by \eqref{eq:refinement}. (By convention, $f(\empty, \empty) = 1.$)
\end{defn}

Note that this respects concatenation: $f(\a\a', \b\b') = f(\a,\b) f(\a',\b')$ when $\a \le \b$ and $\a' \le \b'.$ Also, for $\a \neq \empty,$ we have $f(\a, |\a|) = f(\a).$

\subsection{Hopf algebras and duality}

Let $H = (H, m_H, u_H, \Delta_H, \eps_H, S_H)$ be a Hopf algebra over $\bk.$ The identity element will be denoted $1_H = u_H(1)$. (We will omit subscripts when there is no cause for confusion.) For an integer $k \ge 0,$ we write $\Delta^{k-1} \colon H \to H^{\tensor k}$ for the map obtained by iterating comultiplication $k-1$ times (where by convention $\Delta^{-1} = \eps$).

We will consider only connected graded Hopf algebras of finite type, so that all structure maps are graded with respect to the direct sum decomposition $H = \bigoplus_{n \ge 0} H_n$, $\dim H_n < \infty$ for all $n$, and $H_0 \cong \bk$.

Given a composition $\a = (\a_1, \dots, \a_\ell) \comp n$ and an element $h \in H$, we define $\Delta_\a(h)$ to be the component of $\Delta^{\ell-1}(h) \in H^{\tensor \ell}$ in degree $(\a_1, \dots, \a_\ell),$ that is, the projection of $\Delta^{\ell-1}(h)$ onto $H_{\a_1} \tensor \dotsm \tensor H_{\a_\ell}.$

If $V$ is a vector space, we denote by $V^*$ its dual vector space. If $V = \bigoplus_{n \ge 0} V_n$ is a graded vector space, then its \emph{graded dual} is $V^\circ = \bigoplus_{n \ge 0} V_n^*.$ If $\{v_\a\}_{\a \in I}$ is a graded basis for $V$ (and $V$ is of finite type), then we denote by $\{v_\a^*\}_{\a \in I}$ the dual basis of $V^\circ$, so that $\angle{v_\a^*, v_\b} = \delta_{\a\b}$ for all $\a, \b \in I.$ 

\subsection{Quasisymmetric functions}

A \emph{quasisymmetric function} (over $\bk$) is a formal power series $F \in \bk[[x_1, x_2, \dots]]$ of bounded degree in variables $x_1, x_2, \dots$ such that the coefficients of $x_{i_1}^{\a_1} \dotsm x_{i_k}^{\a_k}$ and $x_{j_1}^{\a_1} \dotsm x_{j_k}^{\a_k}$ in $F$ are equal whenever $i_1 < \dots < i_k$ and $j_1 < \dots < j_k.$

The quasisymmetric functions naturally form a vector space (over $\bk$) denoted $\QSym.$ A standard basis of $\QSym$ is the \emph{monomial basis} $\{M_\a\},$ which is indexed by compositions $\a = (\a_1, \dots, \a_\ell)$ and defined by \[
    M_\a = \sum_{i_1 < \dots < i_\ell} x_{i_1}^{\a_1} \dotsm x_{i_\ell}^{\a_\ell}
\] (and $M_\empty = 1$). In fact, $\QSym$ has the structure of a connected graded Hopf algebra, where the grading is given by $\QSym_n = \operatorname{span} \{ M_\a : \a \comp n\},$ multiplication is the usual multiplication of power series, and comultiplication is given by deconcatenation:
\begin{equation} \label{eq:monomial-deconcatenate}
    \Delta(M_\y) = \sum_{\a \b = \y} M_\a \tensor M_\b = \sum_{i=0}^{\ell(\y)} M_{(\y_1, \dots, \y_i)} \tensor M_{(\y_{i+1}, \dots, \y_{\ell(\y)})}.
\end{equation} 
The ring $\Sym$ of symmetric functions is naturally contained in $\QSym$ 
(since, for example, the \emph{(symmetric) power sum basis} $\{p_\lambda\}$ satisfies $p_\lambda = \prod_{i=1}^{\ell(\lambda)} M_{\lambda_i}$),
which also makes $\Sym$ a connected graded Hopf algebra.

\subsection{Combinatorial Hopf algebras} 

Here we summarize some important results from the work of Aguiar, Bergeron, and Sottile \cite{abs}.

A \emph{character} of a Hopf algebra $H$ is an algebra map $\zeta \colon H \to \bk,$ that is, a linear map satisfying $\zeta(ab) = \zeta(a) \zeta(b)$ and $\zeta(1_H) = 1.$ For any Hopf algebra $H,$ its characters form a group $\bX(H)$ under \emph{convolution}: \[
    \zeta * \zeta' = m_\bk \circ (\zeta \tensor \zeta') \circ \Delta_H.
\] The identity element of $\bX(H)$ is the counit $\eps_H,$ and the inverse of $\zeta \in \bX(H)$ is $\zeta^{-1} = \zeta \circ S_H.$

\begin{defn}
    A \emph{combinatorial Hopf algebra} is a pair $(H, \zeta),$ where $H$ is a connected graded Hopf algebra and $\zeta \in \bX(H).$ A \emph{morphism of combinatorial Hopf algebras} $(H, \zeta) \to (H', \zeta')$ is a graded Hopf map $\Phi \colon H \to H'$ that also satisfies $\zeta = \zeta'\circ \Phi.$
\end{defn}

Let $\zeta_Q$ be the character of $\QSym$ defined on the monomial basis by \[
    \zeta_Q(M_\a) = \begin{cases}
        1 & \text{if } \ell(\a) \le 1, \\
        0 & \text{otherwise},
    \end{cases}
\] or equivalently on the level of formal power series by \[
    \zeta_Q(F) = F(1, 0, 0, \dots).
\]
By \cite[Theorem 4.1]{abs}, the pair $(\QSym, \zeta_Q)$ is the terminal object in the category of combinatorial Hopf algebras; in fact, it is also the terminal object in the category of \emph{combinatorial coalgebras} (connected graded coalgebras equipped with linear functionals). For our purposes, we restate this result, in a slightly less general form, as follows:

\begin{prop} \label{prop:abs-universal}
    Let $(H, \zeta)$ be any combinatorial Hopf algebra.
    
    \begin{enumerate}[(i)]
        \item There is a unique graded coalgebra map $\Phi \colon H \to \QSym$ satisfying $\zeta = \zeta_Q \circ \Phi$.
        \item The map $\Phi$ is also an algebra map; hence, it is a map of combinatorial Hopf algebras $(H, \zeta) \to (\QSym, \zeta_Q)$ and is the unique such map.
        \item The map $\Phi$ is given as follows: for $h \in H_n,$ we have \[
            \Phi(h) = \sum_{\a \comp n} \left( \zeta^{\tensor \ell(\a)} \Delta_\a(h) \right) M_\a.
        \]
    \end{enumerate}
\end{prop}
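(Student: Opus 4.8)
The plan is to establish uniqueness together with the explicit formula (iii) first, then build $\Phi$ from that formula, and finally promote it to an algebra map; this recovers the standard argument for \cite[Theorem 4.1]{abs}.

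First I would prove uniqueness and (iii) simultaneously. Suppose $\Phi\colon H\to\QSym$ is any graded coalgebra map with $\zeta_Q\circ\Phi=\zeta$, and write $\Phi(h)=\sum_{\a\comp n}c_\a(h)M_\a$ for $h\in H_n$. The key point is that a graded coalgebra map intertwines the operators $\Delta_\a$: for a composition $\a$ of length $\ell$ one has $\Delta_\a\circ\Phi=\Phi^{\tensor\ell}\circ\Delta_\a$, since $\Phi$ is graded and commutes with $\Delta^{\ell-1}$. Applying $\zeta_Q^{\tensor\ell}$ and using $\zeta_Q^{\tensor\ell}\circ\Phi^{\tensor\ell}=(\zeta_Q\circ\Phi)^{\tensor\ell}=\zeta^{\tensor\ell}$ gives $\zeta^{\tensor\ell(\a)}\Delta_\a(h)=\zeta_Q^{\tensor\ell(\a)}\Delta_\a(\Phi(h))$, so it remains to compute $\zeta_Q^{\tensor\ell(\a)}\Delta_\a(M_\b)$. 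Iterating the deconcatenation coproduct \eqref{eq:monomial-deconcatenate}, $\Delta_\a(M_\b)$ is the sum of $M_{\b^{(1)}}\tensor\dots\tensor M_{\b^{(\ell(\a))}}$ over all ways to write $\b=\b^{(1)}\dotsm\b^{(\ell(\a))}$ with $|\b^{(i)}|=\a_i$; since $\zeta_Q(M_{\b^{(i)}})$ vanishes unless $\b^{(i)}$ is the single part $(\a_i)$, this forces $\b=\a$, so $\zeta_Q^{\tensor\ell(\a)}\Delta_\a(M_\b)=\delta_{\a\b}$. Hence $c_\a(h)=\zeta^{\tensor\ell(\a)}\Delta_\a(h)$, which is both the uniqueness claim and (iii).

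For existence I would define $\Phi$ by the formula in (iii); it is visibly graded and linear, with $\Phi(1_H)=M_\empty=1$ from the conventions $\Delta_\empty=\eps_H$ and $\zeta^{\tensor 0}=\mathrm{id}_\bk$. To check $\Phi$ is a coalgebra map, the counit identity is immediate, and for the coproduct I would compare $\Delta_{\QSym}\Phi(h)$ with $(\Phi\tensor\Phi)\Delta(h)$ using the coassociativity identity $\Delta_{\b\y}=(\Delta_\b\tensor\Delta_\y)\circ\Delta_{(|\b|,|\y|)}$, the factorization $\zeta^{\tensor\ell(\b\y)}=\zeta^{\tensor\ell(\b)}\tensor\zeta^{\tensor\ell(\y)}$, and the deconcatenation formula for $\Delta(M_\a)$, matching the terms indexed by $\a=\b\y$ on each side. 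Finally $\zeta_Q\circ\Phi=\zeta$ holds because for $h\in H_n$ with $n\ge1$ the only composition of $n$ with $\ell(\a)\le1$ is $(n)$ and $\zeta^{\tensor 1}\Delta_{(n)}(h)=\zeta(h)$, while the degree-$0$ case is trivial.

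The remaining step is (ii), that $\Phi$ is an algebra map, and I expect this to be the main obstacle, since it is the one place that uses the bialgebra (not merely coalgebra) structure. Here the clean route is to apply part (i) to the auxiliary combinatorial Hopf algebra $(H\tensor H,\zeta_{H\tensor H})$, where $H\tensor H$ is connected graded of finite type and $\zeta_{H\tensor H}(a\tensor b)=\zeta(a)\zeta(b)$ is a character. Both $\Phi\circ m_H$ and $m_{\QSym}\circ(\Phi\tensor\Phi)$ are graded coalgebra maps $H\tensor H\to\QSym$ --- because $m_H$ and $m_{\QSym}$ are coalgebra maps (as $H$ and $\QSym$ are bialgebras), $\Phi\tensor\Phi$ is a coalgebra map, and composites of coalgebra maps are coalgebra maps --- and both satisfy $\zeta_Q\circ(-)=\zeta_{H\tensor H}$, using that $\zeta$ and $\zeta_Q$ are algebra maps and $\zeta_Q\circ\Phi=\zeta$. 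By the uniqueness in (i) applied to $(H\tensor H,\zeta_{H\tensor H})$ the two coincide, i.e.\ $\Phi(ab)=\Phi(a)\Phi(b)$; together with $\Phi(1_H)=1$ this makes $\Phi$ an algebra map, hence a morphism of combinatorial Hopf algebras, and it is the unique such morphism because any such morphism is in particular a graded coalgebra map $\Psi$ with $\zeta_Q\circ\Psi=\zeta$, hence equals $\Phi$ by (i).
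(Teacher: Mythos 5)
Your argument is correct. One thing to note: the paper does not actually prove Proposition~\ref{prop:abs-universal} --- it is quoted from \cite[Theorem 4.1]{abs} --- so the natural comparison is with the paper's proof of the analogous Theorem~\ref{thm:Sh-universal} for $\Sh$. There, parts (i) and (iii) are handled by dualizing: the graded dual of the target is free as an algebra on the degree-one-length generators (for $\Sh^\circ$, the elements $x_n^*$), so the dual algebra map is pinned down by $\xi$ and the formula is then read off by pairing against $x_\a^*$; this uses the finite-type hypothesis. You instead work directly on the primal side, expanding $\Phi(h)$ in the monomial basis and extracting the coefficients via $\zeta_Q^{\tensor\ell}\Delta_\a(M_\b)=\delta_{\a\b}$, which gives uniqueness and the formula at once and does not need duality or finite type; the price is that existence requires a direct (routine, and only sketched in your write-up) verification that the formula defines a coalgebra map, whereas in the dual picture existence is automatic from freeness. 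Your part (ii) is exactly the paper's argument: apply uniqueness to $H\tensor H$ equipped with $\zeta\circ m_H$, the analogue of Lemma~\ref{lem:infinitorial-tensor} being trivial here since a character composed with multiplication is again a character of the tensor square. Both routes are sound; yours is the more elementary and slightly more general one, the paper's (for $\Sh$) is the slicker one given the freeness of the dual.
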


This result explains why many classes of combinatorial objects have natural quasisymmetric invariants. We now present one important example of this phenomenon. (Many more examples appear in \cite{abs}.)

\begin{example} \label{ex:posets0}
     Let $\mathcal{P}$ be the Hopf algebra of finite posets (up to isomorphism), graded by size, whose multiplication is disjoint union and whose comultiplication is given by \[
        \Delta(P) = \sum_{I} I \tensor (P \setminus I)
    \] where $I$ ranges over all order ideals of $P$. Then $\mathcal{P}$ can be made into a combinatorial Hopf algebra by taking the character $\zeta$ given by $\zeta(P) = 1$ for all posets $P.$ Using Proposition~\ref{prop:abs-universal}(iii), one can check that the unique map of combinatorial Hopf algebras $\Phi\colon (\mathcal{P}, \zeta) \to (\QSym, \zeta_Q)$ sends a poset $P$ to its (naturally labeled) \emph{$P$-partition generating function}: \[
        \Phi(P) = K_P(\mathbf{x}).
    \] (See e.g.\ \cite{mcnamaraward} for more information about $P$-partition generating functions.)
\end{example}

\subsection{The shuffle algebra} \label{section-shufflealgebra}

We now recall the notion of the shuffle of two compositions and use it to define the shuffle algebra, the other central object of our study.

\begin{defn}
    Let $\a = (\a_1, \dots, \a_k)$ and $\b = (\b_1, \dots, \b_\ell)$ be compositions. Their \emph{shuffle set}, denoted $\a \sh \b,$ is the multiset consisting of all compositions of length $k+\ell$ that contain $\a$ and $\b$ as disjoint subsequences, taken with multiplicity.
\end{defn}

\begin{defn}
    The \emph{shuffle algebra}\footnote{
        Other sources instead define ``the shuffle algebra $\Sh(V)$ of a vector space $V$'' (e.g. \cite[Proposition 1.6.7]{grinberg}). From this point of view, the shuffle algebra we are concerned with here is the shuffle algebra of a vector space $V$ with a basis indexed by the positive integers.
    } $\Sh$ is the connected graded Hopf algebra with basis $\{x_\a\}$ indexed by compositions $\a,$ where the grading is given by $\Sh_n = \operatorname{span} \{ x_\a : \a \comp n\},$ multiplication is given by shuffling: \begin{equation}
        x_\a x_\b = \sum_{\y \in \a \sh \b} x_\y,
    \end{equation} and comultiplication is given by deconcatenation: \begin{equation} \label{eq:shuffle-deconcatenate}
        \Delta(x_\y) = \sum_{\a \b = \y} x_\a \tensor x_\b.
    \end{equation}
\end{defn}

For example, we have \[
    x_{12} x_{2} = 2x_{122} + x_{212} \qquad \text{ and } \qquad \Delta(x_{31}) = 1 \tensor x_{31} + x_{3} \tensor x_{1} + x_{31} \tensor 1
\] (where we omit parentheses and commas when writing compositions; also note that $x_\empty = 1$ is the identity element of $\Sh$). By \cite[Theorem 3.1]{benedettisagan}, the antipode of $\Sh$ is \begin{equation} \label{eq:Sh-antipode}
    S_\Sh(x_\a) = (-1)^{\ell(\a)} x_{\tiny \rev{\a}}.
\end{equation}

\section{Infinitesimal characters of Hopf algebras} \label{sec:infchars}

In this section, we prove a universal property of the shuffle algebra $\Sh$ analogous to that of $\QSym$ proved by Aguiar, Bergeron, and Sottile \cite{abs}. We first recall the definition of an infinitesimal character of a Hopf algebra and give some examples; see \cite{manchon2006hopf} for more information.

\begin{defn}
    Let $H$ be a Hopf algebra. An \emph{infinitesimal character} of $H$ is a linear map $\xi \colon H \to \bk$ that satisfies \[
        \xi(ab) = \eps(a) \xi(b) + \xi(a) \eps(b)
    \] for all $a, b \in H.$ 
    We denote the set of all infinitesimal characters of $H$ by $\Xi(H).$
\end{defn}

When $H$ is graded and connected, we can give a simpler equivalent definition.

\begin{prop} \label{prop:infchar}
    Let $H$ be a connected graded Hopf algebra. A linear map $\xi \colon H \to \bk$ is an infinitesimal character of $H$ if and only if $\xi(1_H) = 0$ and $\xi(ab) = 0$ for all homogeneous $a, b \in H$ of positive degree.
\end{prop}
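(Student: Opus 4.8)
The plan is to prove both implications directly from the defining identity $\xi(ab) = \eps(a)\xi(b) + \xi(a)\eps(b)$, exploiting the fact that in a connected graded Hopf algebra the counit $\eps$ is simply projection onto the degree-zero component $H_0 \cong \bk$. In particular, $\eps(1_H) = 1$, and $\eps$ annihilates every homogeneous element of positive degree.

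For the forward direction, suppose $\xi$ is an infinitesimal character. Taking $a = b = 1_H$ in the defining identity gives $\xi(1_H) = \eps(1_H)\xi(1_H) + \xi(1_H)\eps(1_H) = 2\xi(1_H)$, so $\xi(1_H) = 0$. Next, if $a$ and $b$ are homogeneous of positive degree, then $\eps(a) = \eps(b) = 0$, so the defining identity immediately gives $\xi(ab) = 0$.

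For the converse, suppose $\xi(1_H) = 0$ and $\xi(ab) = 0$ for all homogeneous $a,b$ of positive degree. I need to verify $\xi(ab) = \eps(a)\xi(b) + \xi(a)\eps(b)$ for all $a, b \in H$. By bilinearity of both sides in $a$ and $b$, it suffices to check this on homogeneous elements, and since $H = H_0 \oplus H_+$ (where $H_+ = \bigoplus_{n\ge 1} H_n$), I can treat four cases according to whether each of $a, b$ lies in $H_0$ or $H_+$; using $H_0 = \bk\cdot 1_H$, I can further reduce the $H_0$ cases to $a = 1_H$ or $b = 1_H$ by scaling. If $a = b = 1_H$, both sides are $0$ by hypothesis. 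If $a = 1_H$ and $b \in H_+$ is homogeneous, the left side is $\xi(b)$ and the right side is $\eps(1_H)\xi(b) + \xi(1_H)\eps(b) = \xi(b)$; the case $b = 1_H$, $a \in H_+$ is symmetric. If both $a, b \in H_+$ are homogeneous, the left side is $\xi(ab) = 0$ by hypothesis and the right side is $0 + 0 = 0$ since $\eps$ vanishes on $H_+$. This exhausts all cases.

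There is essentially no hard step here; the only thing to be careful about is the reduction in the converse direction — making sure the bilinearity argument and the decomposition $H = H_0 \oplus H_+$ genuinely cover all pairs $(a,b)$, including "mixed" elements that are sums of a scalar multiple of $1_H$ and a positive-degree part. I would state explicitly that both sides of the identity are $\bk$-bilinear, so it is enough to verify it on pairs of homogeneous elements, and that a degree-zero homogeneous element is a scalar multiple of $1_H$, which together with homogeneity of $\eps$ and $\xi$ handles everything.
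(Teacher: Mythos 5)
Your proof is correct and follows essentially the same route as the paper's: the forward direction via $\xi(1_H) = 2\xi(1_H)$ and the vanishing of $\eps$ in positive degree, and the converse by bilinearity, reduction to homogeneous elements, and using connectedness to rescale degree-zero elements to $1_H$. The case analysis you spell out is just a slightly more explicit version of the paper's argument.
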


\begin{proof}
    If $\xi$ is an infinitesimal character, then $\xi(1_H) = \xi(1_H 1_H) = 2 \eps(1_H) \xi(1_H) = 2 \xi(1_H),$ so $\xi(1_H) = 0.$ Since $H$ is graded, if $a,b \in H$ are homogeneous of positive degree, $\eps(a)=\eps(b)=0$, so $\xi(ab) = \eps(a) \xi(b) + \xi(a) \eps(b) = 0$.

    For the converse, by bilinearity, it suffices to show that $\xi(ab) = \eps(a) \xi(b) + \xi(a) \eps(b)$ for homogeneous $a$ and $b$. If $a$ and $b$ both have positive degree, then both sides vanish. Otherwise, since $H$ is connected, we may assume $a = 1_H$ by rescaling. Then \[
        \eps(1_H) \xi(b) + \xi(1_H) \eps(b) = 1 \cdot \xi(b) + 0 \cdot \eps(b) = \xi(b) = \xi(1_H \cdot b),
    \] as desired.
\end{proof}

\begin{remark}In the connected graded case, $\Xi(H)$ has the structure of a Lie algebra (over $\bk$) with Lie bracket \[
    [\xi, \xi'] := \xi * \xi'- \xi' * \xi,
\] where $*$ is convolution (\cite[Proposition II.4.2]{manchon2006hopf}). In fact, $\bX(H)$ can be made into an infinite-dimensional Lie group with $\Xi(H)$ as the corresponding Lie algebra; we refer the interested reader to \cite{infdimliegroups} for the construction.
\end{remark}

Here are some examples of infinitesimal characters of combinatorial interest.

\begin{example} \label{example-posets1}
    Let $\mathcal{P}$ be the Hopf algebra of posets defined in Example~\ref{ex:posets0}. Consider the linear map $\xi \colon \mathcal{P} \to \bk$ defined on posets $P$ by \[
        \xi(P) = \begin{cases}
            1 & \text{if } P \text{ has a unique minimal element}, \\
            0 & \text{otherwise}.
        \end{cases}
    \] Then $\xi$ is an infinitesimal character of $\mathcal{P}$ because it annihilates the empty poset, as well as the disjoint union of two nonempty posets since this never has a unique minimal element.
\end{example}

\begin{example} \label{example-graphs1}
    Let $\mathcal{G}$ be the Hopf algebra of finite graphs (up to isomorphism), graded by number of vertices, whose multiplication is disjoint union and whose comultiplication is given by \[
        \Delta(G) = \sum_{H \subseteq G} H \tensor (G \setminus H),
    \] where $H$ and $G \setminus H$ are induced subgraphs on complementary subsets of vertices. 
    Consider the linear map $\xi\colon \mathcal{G} \to \bk$ that sends a graph $G$ to the coefficient of $k^1$ in its chromatic polynomial $\chi_G(k).$ Then $\xi$ is an infinitesimal character of $\mathcal{G}.$ Indeed, $\xi$ annihilates the empty graph (whose chromatic polynomial is $1$); and $k$ divides $\chi_G(k)$ for every nonempty graph $G,$ so $k^2$ divides $\chi_G(k) \chi_H(k) = \chi_{G \sqcup H}(k)$ for any nonempty graphs $G$ and $H,$ which implies $\xi(G \sqcup H) = 0.$
\end{example}

One might wonder why we are choosing to highlight these examples---after all, a more obvious example of an infinitesimal character of $\mathcal{P}$ (resp.\ $\mathcal{G}$) is the linear map that simply sends nonempty connected posets (resp.\ graphs) to $1$ and all others to $0.$ One reason is that these examples have other significance: Example~\ref{example-posets1} was used in \cite{liuweselcouch} to prove the irreducibility of $P$-partition generating functions of connected posets, and Example~\ref{example-graphs1} was shown in \cite{greenezaslavsky} to count certain acyclic orientations of graphs. Another reason, which we will explore below, is that these infinitesimal characters arise from very simple characters of $\mathcal{P}$ and $\mathcal{G}$ under certain bijections $\bX(\mathcal{P}) \to \Xi(\mathcal{P})$ and $\bX(\mathcal{G}) \to \Xi(\mathcal{G}).$

Our final example of an infinitesimal character is one for the shuffle algebra $\Sh,$ which will play a similar role as the canonical character $\zeta_Q$ of $\QSym.$

\begin{example}
    Define $\xi_S \colon \Sh \to \bk$ by \[
        \xi_S(x_\a) = \begin{cases}
            1 & \text{if } \ell(\a) = 1, \\
            0 & \text{otherwise.}
        \end{cases}
    \]
    Then $\xi_S$ is an infinitesimal character. Indeed, $\xi_S(x_\empty) = 0$ since $\ell(\empty) = 0$; and if $\a$ and $\b$ are nonempty compositions, then \[
        \xi_S(x_\a x_\b) = \sum_{\y \in \a \sh \b} \xi_S(x_\y) = 0
    \] because $\ell(\y) = \ell(\a) + \ell(\b) \ge 2$ for all $\y \in \a \sh \b.$
\end{example}

\subsection{Infinitorial Hopf algebras}

We now construct the category of infinitorial Hopf algebras and show that the shuffle algebra $\Sh$ is the terminal object in this category. This provides a natural analog of Proposition~\ref{prop:abs-universal} and places $\Sh$ on equal footing with $\QSym.$ 
\begin{defn}
    An \emph{infinitorial Hopf algebra} is a pair $(H, \xi),$ where $H$ is a connected graded Hopf algebra and $\xi \in \Xi(H).$ A \emph{morphism of infinitorial Hopf algebras} $(H, \xi) \to (H', \xi')$ is a graded Hopf map $\Phi \colon H \to H'$ that also satisfies $\xi = \xi' \circ \Phi.$
\end{defn}

\begin{example} \label{ex:infinitorial-morphism}
    Let $\mathcal{P}$ and $\xi$ be as in Example~\ref{ex:posets0}, and define a linear map $\eta \colon \QSym \to \bk$ by $\eta(M_\a) = (-1)^{\ell(\a)-1} \lp(\a).$ Liu and Weselcouch \cite[\S 3.1]{liuweselcouch} showed that $\eta$ is an infinitesimal character of $\QSym$ and that \[
        \eta(K_P(\mathbf{x})) = \xi(P),
    \] where $K_P(\mathbf{x})$ is the (naturally labeled) $P$-partition generating function.
    
    Recall from Example~\ref{ex:posets0} that $\Phi(P) = K_P(\mathbf x)$ is a graded Hopf map $\mathcal{P} \to \QSym$; since $\eta \circ \Phi = \xi,$ it follows that $\Phi$ is a map of infinitorial Hopf algebras $(\mathcal{P}, \xi) \to (\QSym, \eta).$ (We will revisit this example in Section~\ref{sec:typeI}.)
\end{example}

With these definitions, we obtain the category of infinitorial Hopf algebras (over $\bk$). To prove that $(\Sh, \xi_S)$ is the terminal object in this category, we will first need a simple lemma.

\begin{lemma} \label{lem:infinitorial-tensor}
    If $H$ is connected graded and $\xi \in \Xi(H),$ then $\xi \circ m_H \in \Xi(H \tensor H).$
\end{lemma}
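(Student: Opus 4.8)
The plan is to recognize $H \tensor H$ as a connected graded Hopf algebra and apply Proposition~\ref{prop:infchar} to the linear functional $\chi := \xi \circ m_H \colon H \tensor H \to \bk$. Thus there are exactly two things to check: that $\chi$ vanishes on $1_{H \tensor H}$, and that $\chi$ vanishes on the product of any two homogeneous elements of $H \tensor H$ of positive degree. The first is immediate, since $1_{H \tensor H} = 1_H \tensor 1_H$ and hence $\chi(1_{H \tensor H}) = \xi(m_H(1_H \tensor 1_H)) = \xi(1_H) = 0$ by Proposition~\ref{prop:infchar} applied to $\xi$ itself.

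For the product condition I would first record the identity $\chi = \xi \tensor \eps_H + \eps_H \tensor \xi$ of functionals on $H \tensor H$ (where $\xi \tensor \eps_H$ sends $a \tensor b \mapsto \xi(a)\eps_H(b)$, and similarly for $\eps_H \tensor \xi$): evaluating the left side on a pure tensor $a \tensor b$ gives $\xi(ab)$, which equals $\eps(a)\xi(b) + \xi(a)\eps(b)$ precisely because $\xi$ is an infinitesimal character, and that is the value of the right side. Now let $c, d \in H \tensor H$ be homogeneous of positive degree; by bilinearity of the multiplication of $H \tensor H$ and linearity of $\chi$, it suffices to treat pure tensors $c = c_1 \tensor c_2$ and $d = d_1 \tensor d_2$ with $c_1, c_2, d_1, d_2$ homogeneous, $\deg c_1 + \deg c_2 > 0$, and $\deg d_1 + \deg d_2 > 0$. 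Since $(c_1 \tensor c_2)(d_1 \tensor d_2) = c_1 d_1 \tensor c_2 d_2$, the identity above gives $\chi(cd) = \xi(c_1 d_1)\,\eps(c_2 d_2) + \eps(c_1 d_1)\,\xi(c_2 d_2)$. For the first summand: either $c_2$ or $d_2$ has positive degree, in which case $\eps(c_2 d_2) = \eps(c_2)\eps(d_2) = 0$ since $\eps$ is graded; or else $c_2, d_2 \in H_0 = \bk\,1_H$, which forces $c_1$ and $d_1$ to have positive degree, so $\xi(c_1 d_1) = 0$ by Proposition~\ref{prop:infchar}. Either way the first summand vanishes, and the second vanishes by the same argument with the two tensor factors interchanged; hence $\chi(cd) = 0$.

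The only delicate point — really a bookkeeping remark rather than a genuine obstacle — is that a pure-tensor factor of a positive-degree element of $H \tensor H$ may itself lie in degree $0$, so one cannot simply assert that all four factors $c_1, c_2, d_1, d_2$ have positive degree; this is exactly what the small case split handles (alternatively one could reduce, in the style of the proof of Proposition~\ref{prop:infchar}, to factors equal to $1_H$ or of positive degree). Conceptually, the lemma records that the space of infinitesimal characters of a tensor product of connected graded Hopf algebras of finite type decomposes as $\Xi(H)\tensor\eps_H \oplus \eps_H\tensor\Xi(H)$, which is the familiar decomposition of the primitives of a tensor product of coalgebras transported through graded duality; the computation above is the hands-on version of this.
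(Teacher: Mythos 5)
Your proof is correct, and it takes a slightly different route from the paper's. Both arguments verify the two conditions of Proposition~\ref{prop:infchar} for $\chi = \xi \circ m_H$, but the key step differs: the paper, after reducing to pure tensors $a \tensor b$ and $a' \tensor b'$ of positive degree, simply observes that $aa'bb'$ can always be regrouped as a product of two homogeneous positive-degree elements (one of $a \cdot a'bb'$, $aa' \cdot bb'$, $aa'b \cdot b'$) and applies Proposition~\ref{prop:infchar} once, whereas you first establish the functional identity $\xi \circ m_H = \xi \tensor \eps_H + \eps_H \tensor \xi$ and then kill each summand using gradedness of $\eps_H$ together with a small case split on whether the degree-two factors lie in $H_0$. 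The paper's regrouping trick is a bit shorter; your identity is slightly more structural --- it is in fact the same identity the paper itself invokes later, in the proof of Theorem~\ref{thm:Sh-universal}(ii), where $\xi_S \circ m_\Sh = m_\bk \circ (\eps_\Sh \tensor \xi_S + \xi_S \tensor \eps_\Sh)$ is used, and it directly explains your closing remark that infinitesimal characters of $H \tensor H$ decompose as $\Xi(H) \tensor \eps_H \oplus \eps_H \tensor \Xi(H)$. Your explicit reduction to pure tensors (noting that a tensor factor of a positive-degree pure tensor may itself have degree $0$) is a point the paper glosses over, so the added care is welcome rather than redundant.
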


\begin{proof}
        First, we have $\xi \circ m_H(1_{H \tensor H}) = \xi \circ m_H(1_H \tensor 1_H) = \xi(1_H) = 0$, as needed. Now let $a \tensor b$ and $a' \tensor b'$ be homogeneous positive-degree elements of $H \tensor H.$ Then \[
        (\xi \circ m_H)\left((a\tensor b)(a'\tensor b')\right)
        = (\xi \circ m_H)\left(aa' \tensor bb'\right)
        = \xi(aa'bb').
    \] Because $\deg (a \tensor b) > 0,$ either $\deg a > 0$ or $\deg b > 0$; the same holds for $a'$ and $b'.$ Therefore, $aa'bb'$ can always be written as the product of two homogeneous positive-degree elements (either $a \cdot a'bb',$ $aa' \cdot bb',$ or $aa'b \cdot b'$). Therefore, $\xi(aa'bb') = 0,$ as desired. This completes the proof by Proposition~\ref{prop:infchar}.
\end{proof}

We are now ready to prove the main result of this section.

\begin{thm} \label{thm:Sh-universal}
    Let $(H, \xi)$ be any infinitorial Hopf algebra.
    
    \begin{enumerate}[(i)]
        \item There is a unique graded coalgebra map $\Psi \colon H \to \Sh$ satisfying $\xi = \xi_S \circ \Psi$.
        \item The map $\Psi$ is also an algebra map; hence, it is a map of infinitorial Hopf algebras $(H, \xi) \to (\Sh, \xi_S)$ and is the unique such map.
        \item The map $\Psi$ is given as follows: for $h \in H_n$, we have \[
            \Psi(h) = \sum_{\a \comp n} \left( \xi^{\tensor \ell(\a)} \Delta_\a(h) \right) x_\a.
        \]
    \end{enumerate}
\end{thm}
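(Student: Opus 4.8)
The plan is to mirror the structure of the proof of Proposition~\ref{prop:abs-universal} in \cite{abs}, using the universal property of $\Sh$ as a tensor/cofree-type coalgebra. The starting observation is that a graded coalgebra map $\Psi \colon H \to \Sh$ is determined by the ``components'' one extracts after composing with $\xi_S$: concretely, since $\Sh$ is (in a suitable sense) cofree on the positive-degree part with respect to the functional $\xi_S$, knowing $\xi_S^{\tensor \ell(\a)} \circ \Delta_\a^{\Sh} \circ \Psi$ for all $\a$ determines $\Psi$, and $\xi_S^{\tensor \ell(\a)} \circ \Delta_\a^{\Sh}(x_\b) = \delta_{\a\b}$ because deconcatenating $x_\b$ into its length-$\ell(\b)$ pieces of the right degrees forces each piece to be a single part. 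So if $\Psi$ is a graded coalgebra map with $\xi_S \circ \Psi = \xi$, then applying $\xi_S^{\tensor \ell(\a)}$ to both sides of $\Delta_\a^{\Sh} \circ \Psi = \Psi^{\tensor \ell(\a)} \circ \Delta_\a^{H}$ (coalgebra map condition) gives $\langle x_\a^*, \Psi(h)\rangle = \xi^{\tensor \ell(\a)}\Delta_\a(h)$, which is exactly the formula in (iii). This simultaneously proves uniqueness in (i) and pins down the only candidate for $\Psi$.

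For existence, I would take the formula in (iii) as the \emph{definition} of $\Psi$ and verify directly that it is a graded coalgebra map and that $\xi_S \circ \Psi = \xi$. Gradedness is immediate from the construction (the degree-$n$ part of $h$ contributes only terms $x_\a$ with $|\a| = n$). The condition $\xi_S \circ \Psi = \xi$ follows since $\xi_S(x_\a) \neq 0$ only for $\ell(\a) = 1$, i.e.\ $\a = (n)$, and then the coefficient is $\xi^{\tensor 1}\Delta_{(n)}(h) = \xi(h)$ for $h \in H_n$; summing over $n$ recovers $\xi$ on all of $H$ (and $\Psi(1_H) = 1_\Sh$ since $\Delta_\empty = \eps$ forces the degree-$0$ coefficient to be $\eps(1_H) = 1$). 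To check the coalgebra map property $\Delta_\Sh \circ \Psi = (\Psi \tensor \Psi) \circ \Delta_H$, I would compare coefficients of $x_\a \tensor x_\b$ on both sides for a homogeneous $h$: on the left, using deconcatenation \eqref{eq:shuffle-deconcatenate}, the coefficient is the $x_{\a\b}$-coefficient of $\Psi(h)$, namely $\xi^{\tensor(\ell(\a)+\ell(\b))}\Delta_{\a\b}(h)$; on the right it is $\big(\xi^{\tensor \ell(\a)}\Delta_\a \tensor \xi^{\tensor \ell(\b)}\Delta_\b\big)\Delta_H(h)$, which equals the same thing by coassociativity (the iterated comultiplication $\Delta^{\ell(\a)+\ell(\b)-1}$ factors through $\Delta$ followed by $\Delta^{\ell(\a)-1}\tensor\Delta^{\ell(\b)-1}$, and projecting onto the appropriate graded piece gives $\Delta_{\a\b}$). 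Counit compatibility $\eps_\Sh \circ \Psi = \eps_H$ is the degree-$0$ statement already noted.

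The genuinely substantive step is part (ii): showing the coalgebra map $\Psi$ is automatically an algebra map, i.e.\ $\Psi(hh') = \Psi(h)\Psi(h')$ and $\Psi(1_H) = 1_\Sh$. The second is done above. For the first, I expect to argue as in \cite{abs}: both $m_\Sh \circ (\Psi \tensor \Psi)$ and $\Psi \circ m_H$ are graded coalgebra maps $H \tensor H \to \Sh$ (using that $m_\Sh$ and $m_H$ are coalgebra maps since $\Sh, H$ are bialgebras), so by the uniqueness part (i) applied to the infinitorial Hopf algebra $(H \tensor H, \xi \circ m_H)$ — here Lemma~\ref{lem:infinitorial-tensor} is exactly what guarantees $\xi \circ m_H \in \Xi(H \tensor H)$, so this is a legitimate object of the category — it suffices to check that both maps satisfy the defining property $\xi_S \circ (\,\cdot\,) = \xi \circ m_H$. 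For $\Psi \circ m_H$ this is immediate from $\xi_S \circ \Psi = \xi$. For $m_\Sh \circ (\Psi \tensor \Psi)$, one computes $\xi_S \circ m_\Sh \circ (\Psi \tensor \Psi)$ on $a \tensor b$: writing $\Psi(a) = \sum c_\a x_\a$, $\Psi(b) = \sum d_\b x_\b$, the product $\Psi(a)\Psi(b)$ is a sum of shuffles, and $\xi_S$ picks out only length-$1$ terms $x_{(n)}$, which arise in $x_\a x_\b$ only when one of $\a, \b$ is empty and the other is a single part; tracking this shows $\xi_S(\Psi(a)\Psi(b)) = \eps_H(a)\xi(b) + \xi(a)\eps_H(b) = (\xi \circ m_H)(a \tensor b)$. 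Hence the two maps agree, giving (ii). The main obstacle is thus purely bookkeeping: making sure the ``coalgebra map'' hypothesis in the uniqueness statement (i) is correctly applied to $H \tensor H$ with the tensor coalgebra structure, and that $\xi_S$ really does behave as claimed on shuffle products — both of which are routine but must be stated carefully.
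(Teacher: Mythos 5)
Your argument is correct, and part (ii) is exactly the paper's: both apply the uniqueness statement to $(H \tensor H, \xi \circ m_H)$, with Lemma~\ref{lem:infinitorial-tensor} supplying membership in the category, and check that $\Psi \circ m_H$ and $m_\Sh \circ (\Psi \tensor \Psi)$ both satisfy $\xi_S \circ (\,\cdot\,) = \xi \circ m_H$ (the paper does the second check abstractly via $\xi_S \circ m_\Sh = m_\bk \circ (\eps_\Sh \tensor \xi_S + \xi_S \tensor \eps_\Sh)$, you do it by tracking which shuffles have length one --- same content). Where you genuinely diverge is in (i) and (iii): the paper dualizes, observing that $\{x_\a^*\}$ multiplies by concatenation so that $\Sh^\circ$ is the free algebra on $\{x_n^*\}$; uniqueness \emph{and} existence of the algebra map $\Psi^*$ then come for free from the universal property of a free algebra, and the formula in (iii) is read off afterwards through the pairing $\langle x_\a^*, \Psi(h)\rangle$. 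You instead never pass to the dual: you extract the coefficient $\langle x_\a^*, \Psi(h)\rangle = \xi^{\tensor \ell(\a)}\Delta_\a(h)$ directly from the graded coalgebra-map identity (using $\xi_S^{\tensor \ell(\a)}\Delta_\a^{\Sh}(x_\b) = \delta_{\a\b}$), which gives uniqueness and (iii) at once, and then you must earn existence by verifying that the explicit formula defines a graded coalgebra map --- the coefficient-of-$x_\a \tensor x_\b$ comparison via coassociativity, plus the counit and empty-composition edge cases, which is the bookkeeping the paper's freeness argument sidesteps. The trade-off: your route avoids the graded-dual/finite-type machinery and produces the formula from the start, at the cost of a hands-on coassociativity verification; the paper's route gets existence for free but only yields (iii) after a separate duality computation. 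Both are complete proofs; just make sure, in your uniqueness step, to state explicitly that gradedness of $\Psi$ is what lets you pass from the coalgebra-map identity to its degree-$(\a_1,\dots,\a_{\ell})$ component $\Delta_\a^{\Sh}\circ\Psi = \Psi^{\tensor\ell(\a)}\circ\Delta_\a^{H}$.
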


\begin{proof}
    For any such coalgebra map $\Psi,$ consider the dual algebra map $\Psi^* \colon \Sh^\circ \to H^\circ.$ For all homogeneous $h \in H_n$ of positive degree, $\Psi^*$ must satisfy \begin{equation} \label{eq:Sh-universal-proof}
        \angle{\Psi^*(x_n^*), h} = \angle{x_n^*, \Psi(h)} = \xi_S(\Psi(h)) = \xi(h).
    \end{equation} Because $\Psi^*$ is graded, this condition uniquely determines the elements $\Psi^*(x_n^*) \in H^\circ.$ Dualizing \eqref{eq:shuffle-deconcatenate}, we see that the dual basis $\{x_\a^*\} \subseteq \Sh^\circ$ multiplies via $x_\a^* x_\b^* = x_{\a\b}^*,$ so $\Sh^\circ$ is freely generated as an algebra by the elements $\{x_n^* : n \in \Zpos\}.$ Thus there is a unique algebra map $\Psi^*$ satisfying these conditions, hence also a unique coalgebra map $\Psi.$ This proves (i).

    For (ii), it suffices to show that $\Psi \circ m_H = m_\Sh \circ (\Psi \tensor \Psi).$ Applying (i) to $(H \tensor H, \xi \circ m_H),$ which is an infinitorial Hopf algebra by Lemma~\ref{lem:infinitorial-tensor}, we get that there is a unique coalgebra map $\overline{\Psi} \colon H \tensor H \to \Sh$ satisfying $\xi \circ m_H = \xi_S \circ \overline{\Psi}$.     But $\Psi \circ m_H$ and $m_\Sh \circ (\Psi \tensor \Psi)$ are both coalgebra maps with this property: \[
        \xi_S \circ (\Psi \circ m_H) = (\xi_S \circ \Psi) \circ m_H = \xi \circ m_H
    \] and \A
        \xi_S \circ \left(m_\Sh \circ (\Psi \tensor \Psi)\right)
        &= (\xi_S \circ m_\Sh) \circ (\Psi \tensor \Psi) \\
        &= m_\bk \circ (\eps_\Sh \tensor \xi_S + \xi_S \tensor \eps_\Sh) \circ (\Psi \tensor \Psi) \\
        &= m_\bk \circ (\eps_H \tensor \xi + \xi \tensor \eps_H) \\
        &= \xi \circ m_H.
    \B Thus $\Psi \circ m_H = \overline{\Psi} = m_\Sh \circ (\Psi \tensor \Psi),$ proving (ii).

    For (iii), fix $h \in H_n$ with $n > 0$ and a composition $\a = (\a_1, \dots, \a_\ell) \comp n.$ Then by duality, \A
        \angle{x_\a^*, \Psi(h)}
        &= \angle{\Psi^*(x_\a^*), h} \\
                &= \angle{\Psi^*(x_{\a_1}^*) \tensor \dotsm \tensor \Psi^*(x_{\a_\ell}^*), \Delta^{\ell-1} (h) } \\
        &= \angle{\Psi^*(x_{\a_1}^*) \tensor \dotsm \tensor \Psi^*(x_{\a_\ell}^*), \Delta_\a (h) } \\
        &= \xi^{\tensor \ell(\a)} \Delta_\a(h),
    \B where we used \eqref{eq:Sh-universal-proof} and the fact that $\Psi^*(x_{\a_i})$ is homogeneous of degree $\a_i.$ Therefore, \A
        \Psi(h) = \sum_{\a \comp n} \angle{x_\a^*, \Psi(h)} \, x_\a = \sum_{\a \comp n} \left(\xi^{\tensor \ell(\a)} \Delta_\a(h)\right) x_\a,
    \B as desired.
\end{proof}

Just as Proposition~\ref{prop:abs-universal}(iii) allows one to assign a quasisymmetric invariant to an element of a combinatorial Hopf algebra, Theorem~\ref{thm:Sh-universal}(iii) allows one to assign a ``shuffle invariant'' to an element of an infinitorial Hopf algebra. As we will see, one way to think of a shuffle invariant is as a quasisymmetric invariant expressed in terms of a shuffle basis of $\QSym$.

\subsection{Bijections between characters and infinitesimal characters} \label{sec:bijections-char-infchar}

Let $H$ be a connected graded Hopf algebra. Proposition~\ref{prop:abs-universal} shows that for any character $\zeta \in \bX(H)$, there is a unique graded Hopf map $\Phi \colon H \to \QSym$ that satisfies $\zeta = \zeta_Q \circ \Phi.$ Conversely, any graded Hopf map $\Phi \colon H \to \QSym$ corresponds to a character $\zeta$ of $H$. In other words, there is a (canonical) bijection between characters of $H$ and graded Hopf maps $H \to \QSym.$

Theorem~\ref{thm:Sh-universal} provides an analogous result: for every infinitesimal character $\xi \in \Xi(H),$ there is a unique graded Hopf map $\Psi \colon H \to \Sh$ satisfying $\xi = \xi_S \circ \Psi,$ and conversely, every graded Hopf map $\Psi \colon H \to \Sh$ corresponds to an infinitesimal character of $H.$ Hence, there is a (canonical) bijection between infinitesimal characters of $H$ and graded Hopf maps $H \to \Sh.$

It follows then that isomorphisms of graded Hopf algebras between $\QSym$ and $\Sh$ give rise to bijections between characters and infinitesimal characters of $H.$ (Such isomorphisms exist in characteristic $0$ but not in positive characteristic, as we will demonstrate in Sections~\ref{sec:shuffle-bases} and \ref{sec:shuffle-qps}.) Given an isomorphism $\ph \colon \Sh \to \QSym,$ the induced bijection $\Xi(H) \to \bX(H)$ is the mapping $\xi \mapsto \zeta$ that makes the following diagram commute:

\[\begin{tikzcd}
    \bk && H && \bk \\
    \\
    & \Sh && \QSym
    \arrow["\zeta", from=1-3, to=1-5]
    \arrow["\xi"', from=1-3, to=1-1]
    \arrow["{\zeta_Q}"', from=3-4, to=1-5]
    \arrow["{\xi_S}", from=3-2, to=1-1]
    \arrow["\ph", shift left=1, from=3-2, to=3-4]
    \arrow["{\ph^{-1}}", shift left=1, from=3-4, to=3-2]
    \arrow["\Phi", from=1-3, to=3-4]
    \arrow["\Psi"', from=1-3, to=3-2]
\end{tikzcd}\]

In other words, under this bijection, an infinitesimal character $\xi \in \Xi(H)$ corresponds to the character $\zeta = \zeta_Q \circ \ph \circ \Psi,$ where $\Psi$ is the unique map of infinitorial Hopf algebras $(H, \xi) \to (\Sh, \xi_S).$ Conversely, $\zeta \in \bX(H)$ corresponds to $\xi = \xi_S \circ \ph^{-1} \circ \Phi,$ where $\Phi$ is the unique map of combinatorial Hopf algebras $(H, \zeta) \to (\QSym, \zeta_Q).$

\begin{example}
    Let $\mathcal{G}$ be as in Example~\ref{example-graphs1}. If $\zeta \in \bX(\mathcal{G})$ is defined by \[
        \zeta(G) = \begin{cases}
            1 & \text{if } G \text{ has no edges,} \\
            0 & \text{otherwise,}
        \end{cases}
    \] 
    then the map of combinatorial Hopf algebras $\Phi \colon (\mathcal{G}, \zeta) \to (\QSym, \zeta_Q)$ sends a graph $G$ to its \emph{chromatic symmetric function} $X_G$ (see \cite[Example 4.5]{abs}). When $G$ has $n$ vertices, we can express this symmetric function in terms of the power sum basis as $X_G = \sum_{\lambda \vDash n} c_\lambda p_\lambda$ for some constants $c_\lambda$.
    
    Let $\ph\colon \Sh \to \QSym$ be an isomorphism. Then $\ph$ is the unique map of combinatorial Hopf algebras $(\Sh, \zeta_Q \circ \ph) \to (\QSym, \zeta_Q),$ so by Proposition~\ref{prop:abs-universal}(iii), $\ph(x_n)$ must be a scalar multiple of $M_n = p_n$. For simplicity, assume that $\ph(x_n) = p_n$. Then $\xi_S \circ \ph^{-1}(p_n) = \xi_S(x_n) = 1$. On the other hand, when $\ell(\lambda) > 1$, $p_\lambda$ is a product of positive degree elements, so $\xi_S \circ \ph^{-1}(p_\lambda) = 0$.

    We now compute the infinitesimal character $\xi \in \Xi(\mathcal{G})$ that corresponds to $\zeta$ under the bijection $\bX(\mathcal{G}) \to \Xi(\mathcal{G})$ induced by $\ph.$ By the discussion above, 
    \[\xi(G) = \xi_S \circ \ph^{-1}(X_G) = \sum_{\lambda \comp n} c_\lambda \cdot \xi_S \circ \ph^{-1}(p_\lambda) = c_n.\]
                        Now, recall that $X_G$ is related to the chromatic polynomial $\chi_G$ of $G$ via \[
        \chi_G(k) = X_G(1^k) = X_G(\underbrace{1, \dots, 1}_{k}, 0, \dots).
    \] We have that $p_\lambda(1^k) = k^{\ell(\lambda)},$ so \[
        \chi_G(k) = \sum_{\lambda \comp n} c_\lambda p_\lambda(1^k) = \sum_{\lambda \comp n} c_\lambda k^{\ell(\lambda)}.
    \] Because $(n)$ is the only partition of $n$ of length $1,$ it follows that $c_n$ is the coefficient of $k^1$ in $\chi_G(k).$ In other words, $\xi$ is exactly the infinitesimal character of Example~\ref{example-graphs1}.
\end{example}

\begin{remark}
    The above example is exceptional in that the infinitesimal character $\xi$ was (up to scaling) independent of the choice of isomorphism $\ph.$ This occurs here because $X_G$ is a symmetric function, so it admits an expansion in terms of the symmetric power sums $\{p_\lambda\}.$ In view of Proposition~\ref{prop:abs-universal}, the fact that $X_G$ is symmetric arises from the cocommutativity of $\mathcal{G},$ which implies that $\zeta^{\tensor \ell(\a)} \Delta_\a(G) = \zeta^{\tensor \ell(\b)} \Delta_\b(G)$ whenever $\a \sim \b.$
\end{remark}

In Section~\ref{sec:qps-properties}, we will use the ideas of this section to recontextualize Example~\ref{ex:infinitorial-morphism}, as well as recover the \emph{exponential map} $\exp \colon \Xi(H) \to \bX(H).$

\section{Shuffle bases} \label{sec:shuffle-bases}

We now proceed to study the isomorphisms of graded Hopf algebras $\Sh \to \QSym.$ Such an isomorphism can be specified by a particular kind of basis of $\QSym$ which we call a shuffle basis.

\begin{defn} \label{shuffle-basis}
    A \emph{shuffle basis} of QSym is a graded basis $\{X_\a\}$ that satisfies
    \begin{equation} \label{eq:shuffle-basis-mult}
        X_\a X_\b = \sum_{\y \in \a \sh \b} X_\y
    \end{equation} for all compositions $\a$ and $\b,$ and
    \begin{equation} \label{eq:shuffle-basis-comult}
        \Delta(X_\y) = \sum_{\a\b = \y} X_\a \tensor X_\b
    \end{equation} for all compositions $\y.$
\end{defn}

A choice of shuffle basis $\{X_\a\}$ is then equivalent to a choice of isomorphism $\ph \colon \Sh \to \QSym$, where $\ph(x_\a) = X_\a$. In this section, we characterize shuffle bases by giving formulas for the change of basis between a shuffle basis and the monomial basis $\{M_\a\}.$

\subsection{Deconcatenation bases}

We begin by considering the comultiplicative structure of shuffle bases.

\begin{defn}
    A \emph{deconcatenation basis} of $\QSym$ is a graded basis $\{D_\a\}$ that satisfies \[
        \Delta(D_\y) = \sum_{\a\b = \y} D_\a \tensor D_\b
    \] for all compositions $\y.$
\end{defn}

Thus, by \eqref{eq:shuffle-basis-comult}, shuffle bases are deconcatenation bases, as is the monomial basis by \eqref{eq:monomial-deconcatenate}. Note that the dual basis $\{D_\a^*\}$ of a deconcatenation basis multiplies via $D_\a^* D_\b^* = D_{\a\b}^*.$

We call a function $f \colon \Compne \to \bk$ \emph{nonsingular} if $f(n) \neq 0$ for $n \in \Zpos.$ As we now show, changes of basis between deconcatenation bases can be expressed in terms of such functions. (Recall the definition of $f(\alpha, \beta)$ from Section~\ref{sec:compfunctions}.)

\begin{prop} \label{deconcat}
    Let $\{R_\a\}$ be a deconcatenation basis. Choose elements $Q_\a$ of $\QSym$ with $Q_\a \in \QSym_n$ for each $\a \comp n.$ Then $\{Q_\a\}$ is a deconcatenation basis if and only if there exists a nonsingular function $g \colon \Compne \to \bk$ such that, for all $\a$, \begin{equation} \label{eq:deconcatenate-basechange}
        R_\a = \sum_{\b \ge \a} g(\a, \b) Q_\b.
    \end{equation}
\end{prop}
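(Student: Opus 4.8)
The plan is to prove both directions by exploiting the comultiplicative structure together with the triangularity of the change-of-basis formula. Throughout, I will use that for a deconcatenation basis $\{R_\a\}$, the dual basis satisfies $R_\a^* R_\b^* = R_{\a\b}^*$, so that $R_\alpha^*$ is determined by the values $R_n^*$ for $n \in \Zpos$ via $R_\a^* = R_{\a_1}^* \dotsm R_{\a_\ell}^*$. Dualizing \eqref{eq:deconcatenate-basechange}, a relation $R_\a = \sum_{\b \ge \a} g(\a,\b) Q_\b$ is equivalent to the statement that $Q_n^* = \sum_{\a \le n} g(\a, n) R_\a^* = \sum_{\a \comp n} g(\a) R_\a^*$ for each $n$, which (using multiplicativity of both dual bases together with the factorization $g(\a,\b) = \prod_i g(\a^{(i)})$) propagates to $Q_\b^* = \sum_{\a \le \b} g(\a, \b) R_\a^*$ for all $\b$. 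So the proposition is really a statement about when $\{Q_\a^*\}$ and $\{R_\a^*\}$ are related by such a triangular (with respect to refinement) multiplicative change of basis.

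For the ``if'' direction, suppose $g$ is nonsingular and \eqref{eq:deconcatenate-basechange} holds. First I would check that $\{Q_\a\}$ is indeed a basis: the matrix expressing $\{R_\a\}$ in terms of $\{Q_\a\}$ is, in each degree $n$, triangular with respect to the refinement order $\le$ on $\Comp_n$ (since $\b \ge \a$), with diagonal entries $g(\a, \a) = \p \text{ of the singleton pieces}$ — more precisely $g(\a,\a) = g(\a_1)\dotsm g(\a_\ell) \neq 0$ because $g$ is nonsingular. Hence the matrix is invertible and $\{Q_\a\}$ is a graded basis. Then I must verify the deconcatenation property $\Delta(Q_\y) = \sum_{\a\b=\y} Q_\a \tensor Q_\b$. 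The cleanest route is dual: show $\{Q_\a^*\}$ multiplies via $Q_\a^* Q_\b^* = Q_{\a\b}^*$. Inverting the triangular system, write $Q_\a = \sum_{\b \ge \a} h(\a,\b) R_\b$ where $h$ is again multiplicative in the sense of Definition~\ref{def:f-alphabeta} for a nonsingular function $h$ (the ``compositional inverse'' of $g$ under this convolution-type product — this works because the whole setup factors through concatenation); dually $Q_\a^* = \sum_{\b \le \a} h(\b, \a) R_\b^*$. Then compute $Q_\a^* Q_\b^*$ using $R^*$-multiplicativity and the concatenation-compatibility $h(\y, \a\b) = \sum h(\y^{(1)}, \a) h(\y^{(2)}, \b)$ over splittings $\y = \y^{(1)}\y^{(2)}$ with $\y^{(1)} \le \a$, $\y^{(2)} \le \b$; this collapses to $Q_{\a\b}^*$. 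Equivalently — and perhaps more transparently — one observes that $Q_n^* = \sum_{\a \comp n} g(\a) R_\a^*$ and that any algebra generated by prescribing the degree-$n$ generators of a free (on the positive-degree singletons) algebra as arbitrary degree-$n$ elements remains free on those generators provided the leading ($R_n^*$-)coefficient is nonzero; freeness of $\Sh^\circ$-type algebras on the singletons $R_n^*$ then transfers to the $Q_n^*$, giving $Q_\a^* Q_\b^* = Q_{\a\b}^*$ and hence the deconcatenation property for $\{Q_\a\}$.

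For the ``only if'' direction, suppose $\{Q_\a\}$ is a deconcatenation basis with $Q_\a \in \QSym_n$ for $\a \comp n$. Expand $R_\a = \sum_{\b \comp n} c_{\a\b} Q_\b$ in degree $n = |\a|$. Applying $\Delta$ and using that both $\{R_\a\}$ and $\{Q_\a\}$ are deconcatenation bases, I get $\sum_{\a'\a''=\a} R_{\a'} \tensor R_{\a''} = \sum_{\b} c_{\a\b} \sum_{\b'\b''=\b} Q_{\b'} \tensor Q_{\b''}$; comparing coefficients in each bidegree and using linear independence shows the coefficients $c_{\a\b}$ are ``multiplicative across deconcatenation'' — concretely, that $c$ is supported on pairs with $\b \ge \a$ and that $c_{\a, \b}$ factors as a product over the refinement pieces. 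Define $g(n) := c_{n', n}$... more carefully: define $g$ on $\Compne$ by letting $g(\a) := c_{\a, (|\a|)}$, i.e. the coefficient of $Q_{(n)}$ in $R_\a$ when $|\a| = n$; alternatively and more robustly, extract $g(n)$ from the one-part case and build up. The coproduct-compatibility forces $R_\a = \sum_{\b\ge\a} g(\a,\b) Q_\b$ with $g(\a,\b) = \prod_i g(\a^{(i)})$ as in Definition~\ref{def:f-alphabeta}. Finally, nonsingularity of $g$: since the change of basis is invertible in each degree and the matrix is triangular with diagonal $\prod_i g(\a^{(i)})$, taking $\a = (n)$ forces $g(n) \neq 0$.

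The main obstacle I anticipate is the bookkeeping in showing that the coefficient array $c_{\a\b}$ coming from an arbitrary deconcatenation-basis change is forced to have the multiplicative/triangular form $g(\a,\b) = \prod_i g(\a^{(i)})$ — i.e. that compatibility with deconcatenation alone pins down the change of basis to be ``concatenation-multiplicative.'' This is essentially the statement that the only graded coalgebra automorphisms of a deconcatenation coalgebra arise from ``substitutions'' on the generators, and carrying it out requires a careful induction on $\ell(\b)$ (or on $n$), peeling off the last part via the $Q_{(\cdot)} \tensor Q_{(\cdot \cdots)}$-component of $\Delta$. Everything else — invertibility, the dual multiplicativity computation, nonsingularity — is then routine triangular linear algebra.
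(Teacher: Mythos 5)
Your overall strategy---reduce the deconcatenation property of $\{Q_\a\}$ to multiplicativity of the dual basis $\{Q_\a^*\}$, and use deconcatenation of $\{R_\a\}$ to force the change-of-basis coefficients to be supported on $\b \ge \a$ and to factor over the refinement pieces---is the same duality argument the paper uses. Your ``only if'' direction, once the bookkeeping you flag is carried out (compare multidegree-$\b$ components of $\Delta$ applied to $R_\a = \sum_\y c_{\a\y} Q_\y$, or equivalently pair with $Q_{\b_1}^* \tensor \dotsm \tensor Q_{\b_\ell}^*$), is exactly the paper's computation $\angle{Q_\b^*, R_\a} = \angle{Q_{\b_1}^* \tensor \dotsm \tensor Q_{\b_\ell}^*, \Delta_\b(R_\a)} = g(\a,\b)$ with $g(\a) := \angle{Q_n^*, R_\a}$ for $\a \comp n$, and your nonsingularity argument via triangularity is fine.

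Your ``if'' direction, however, has two concrete problems. First, the dualization is mis-stated: from $Q_\a = \sum_{\b\ge\a} h(\a,\b) R_\b$ one gets $R_\b^* = \sum_{\a \le \b} h(\a,\b) Q_\a^*$, not $Q_\a^* = \sum_{\b\le\a} h(\b,\a) R_\b^*$; the correct expansion of the dual basis is $Q_\b^* = \sum_{\a\le\b} g(\a,\b) R_\a^*$, obtained by transposing \eqref{eq:deconcatenate-basechange} directly---no inversion needed. This matters because your route also leans on the unproven claim that the inverse change of basis is again of the form in Definition~\ref{def:f-alphabeta} for a nonsingular $h$; that is precisely the content of Proposition~\ref{prop:fg-equivalence}, which is proved separately in the paper and which you would have to establish here rather than assert. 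Both issues vanish if you work with $g$ directly: $Q_\a^* Q_\b^* = \sum_{\delta\le\a,\,\eps\le\b} g(\delta,\a)\, g(\eps,\b)\, R_{\delta\eps}^* = \sum_{\y \le \a\b} g(\y,\a\b)\, R_\y^* = Q_{\a\b}^*$, using $R^*$-multiplicativity, the concatenation-compatibility $g(\delta\eps,\a\b) = g(\delta,\a)g(\eps,\b)$, and the unique splitting of each $\y \le \a\b$. Second, your ``more transparent'' freeness alternative is incomplete: freeness of the subalgebra generated by $Q_n^* = \sum_{\a\comp n} g(\a) R_\a^*$ shows that the products $Q_{\b_1}^* \dotsm Q_{\b_\ell}^*$ form a basis, but it does not identify them with the dual basis elements $Q_\b^*$ of $\{Q_\a\}$; that identification still requires the pairing computation above. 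With these repairs your proof is correct and essentially coincides with the paper's.
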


\begin{proof}
    Note that if \eqref{eq:deconcatenate-basechange} holds, then by triangularity, $\{Q_\alpha\}$ is a basis if and only if $g(\alpha, \alpha) = \prod_{i=1}^{\ell(\alpha)} g(\alpha_i)$ is nonzero for all $\alpha$, which is equivalent to $g$ being nonsingular. Thus we may assume that $\{Q_\alpha\}$ is a basis. We then have that $\{Q_\alpha\}$ is a deconcatenation basis if and only if its dual basis $\{Q_\alpha^*\}$ is multiplicative, that is, $Q_{\b}^* = Q_{\b_1}^* \cdots Q_{\b_{\ell}}^*$ for all $\b$. This holds if and only if both sides agree when applied to $R_\a$ for all $\a$. In other words, we need that $\langle Q_\b^*, R_\a\rangle$ (the coefficient of $Q_\b$ in $R_\a$) equals
    \[\langle Q_{\beta_1}^* \cdots Q_{\beta_\ell}^*, R_\alpha\rangle = \langle Q_{\beta_1}^* \otimes \cdots \otimes Q_{\beta_\ell}^*, \Delta_\beta(R_\alpha) \rangle. \]
    But since $\{R_\alpha\}$ is a deconcatenation basis, the right hand side vanishes unless $\beta \geq \alpha.$ And if $\beta \ge \alpha,$ then $\Delta_\b(R_\a) = R_{\a^{(1)}}\otimes \dots \otimes R_{\a^{(\ell)}}$ implies 
    \[\langle Q_{\beta_1}^* \otimes \cdots \otimes Q_{\beta_\ell}^*, \Delta_\beta(R_\alpha) \rangle = \prod_{i=1}^{\ell} \langle Q^*_{\beta_i}, R_{\a^{(i)}} \rangle = \prod_{i=1}^{\ell} g(\a^{(i)}) = g(\a,\b),\]
    where we define $g \colon  \Compne \to \bk$ by $g(\a) = \angle{Q_n^*, R_\a}$ for $\a \comp n$. The result follows.
\end{proof}

In general, we can consider pairs of graded bases of $\QSym$ related by \eqref{eq:deconcatenate-basechange}. It turns out that the inverse change of basis always has the same form, regardless of whether the bases are deconcatenation bases.

\begin{prop} \label{prop:fg-equivalence}
    Let $\{Q_\a\}$ and $\{R_\a\}$ be graded bases of $\QSym.$ Then the following are equivalent:

    \begin{enumerate}[(i)]
        \item There exists a nonsingular function $f \colon \Compne \to \bk$ such that $Q_\a = \sum_{\b \ge \a} f(\a, \b) R_\b.$
        \item There exists a nonsingular function $g \colon \Compne \to \bk$ such that $R_\a = \sum_{\b \ge \a} g(\a, \b) Q_\b.$
    \end{enumerate}
\end{prop}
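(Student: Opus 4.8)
The statement is symmetric in $\{Q_\a\}$ and $\{R_\a\}$ (interchanging the roles of $f$ and $g$), so it is enough to prove that (i) implies (ii); assume (i). Since $f$ is nonsingular, the diagonal coefficients $f(\a,\a)=f(\a_1)\dotsm f(\a_{\ell(\a)})$ are nonzero, so in each degree the matrix $(f(\a,\b))_{\a,\b}$ is triangular for the refinement order with invertible diagonal; as $\{Q_\a\}$ and $\{R_\a\}$ are graded bases, there are unique scalars $h(\a,\b)$, defined for $\a\le\b$, with $R_\a=\sum_{\b\ge\a}h(\a,\b)Q_\b$, and substituting (i) and comparing coefficients of $R_\gamma$ shows that $h$ is the unique family satisfying the inversion relations
\[
\sum_{\a\le\b\le\gamma}h(\a,\b)\,f(\b,\gamma)=\delta_{\a\gamma}\qquad\text{for all }\a\le\gamma.
\]
The goal is to exhibit a nonsingular $g\colon\Compne\to\bk$ with $h(\a,\b)=g(\a^{(1)})\dotsm g(\a^{(\ell(\b))})$; writing $g(\a,\b)$ for this product, this is exactly the assertion that $g(\cdot,\cdot)$ solves the same inversion relations.

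The key point is that these relations \emph{factor over the parts of $\gamma$}. Let $\psi\colon\Compne\to\bk$ be arbitrary, extended to pairs by $\psi(\a,\b)=\psi(\a^{(1)})\dotsm\psi(\a^{(\ell(\b))})$. Fix $\gamma=(\gamma_1,\dots,\gamma_k)$ and $\a\le\gamma$ with pieces $\a^{(1)},\dots,\a^{(k)}$, where $\a^{(i)}\comp\gamma_i$. Every $\b$ with $\a\le\b\le\gamma$ is uniquely a concatenation $\b=\b^{(1)}\dotsm\b^{(k)}$ with $\a^{(i)}\le\b^{(i)}\le(\gamma_i)$, and since both $\psi(\cdot,\cdot)$ and $f(\cdot,\cdot)$ respect concatenation (see the remark after Definition~\ref{def:f-alphabeta}), and $f(\delta,(\gamma_i))=f(\delta)$ whenever $\delta\comp\gamma_i$, we obtain
\[
\sum_{\a\le\b\le\gamma}\psi(\a,\b)\,f(\b,\gamma)=\prod_{i=1}^{k}\Bigl(\sum_{\a^{(i)}\le\delta\le(\gamma_i)}\psi(\a^{(i)},\delta)\,f(\delta)\Bigr).
\]
Consequently it suffices to find a nonsingular $g$ satisfying only the single-part relations $\sum_{\eps\le\delta\le(n)}g(\eps,\delta)\,f(\delta)=\delta_{\eps,(n)}$ for every $n$ and every $\eps\comp n$: for such a $g$ the right-hand side above (with $\psi=g$) becomes $\prod_i\delta_{\a^{(i)},(\gamma_i)}=\delta_{\a\gamma}$, so $g(\cdot,\cdot)$ satisfies the inversion relations and equals $h$ by uniqueness, proving (ii).

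It remains to construct $g$ by induction on $n=|\eps|$. Isolating the term $\delta=(n)$ (for which $g(\eps,(n))=g(\eps)$) in the single-part relation rewrites it as
\[
g(\eps)\,f(n)=\delta_{\eps,(n)}-\sum_{\eps\le\delta<(n)}g(\eps,\delta)\,f(\delta).
\]
Each $\delta$ occurring here has $\ell(\delta)\ge2$, so every piece of $\eps$ over $\delta$ has size strictly smaller than $n$; hence the right-hand side depends only on $f$ and on values $g(\eps')$ with $|\eps'|<n$, and we may define $g(\eps)$ to be $f(n)^{-1}$ times that right-hand side, which is legitimate because $f(n)\neq0$. (For $n=1$ this reads $g(1)=f(1)^{-1}$.) Taking $\eps=(n)$ gives $g(n)=f(n)^{-1}\neq0$, so $g$ is nonsingular, as required. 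The only part needing genuine care is the combinatorial bookkeeping behind the factorization identity --- that an interval $[\a,\gamma]$ in the refinement order decomposes as the product $\prod_i[\a^{(i)},(\gamma_i)]$ and that $f$ and $\psi$ are compatible with this decomposition; from a bird's-eye view this just says that the whole argument takes place in the incidence algebra of the refinement order, in which the ``multiplicative'' elements attached to functions $\Compne\to\bk$ form a submonoid whose invertible members (exactly those with nonsingular underlying function) have multiplicative inverses.
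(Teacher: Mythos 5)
Your proof is correct and takes essentially the same approach as the paper's: you define $g$ by a triangular recursion made possible by $f(n)\neq 0$, and you use the factorization of a refinement interval $[\a,\gamma]$ as the product of the intervals $[\a^{(i)},(\gamma_i)]$ over the parts of $\gamma$, together with multiplicativity of pair-functions under concatenation, to upgrade the one-part relations to the full inversion relations. The only cosmetic differences are that you work with the mirrored system $\sum_{\delta\ge\eps} g(\eps,\delta)\,f(\delta)=\delta_{\eps,(n)}$ and conclude via uniqueness of the inverse transition matrix $h$, whereas the paper uses $\sum_{\b\ge\a} f(\a,\b)\,g(\b)$ equal to $1$ exactly when $\ell(\a)=1$ and concludes by triangularity of the basis change; the paper itself remarks that either system determines $g$.
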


\begin{proof}
    By symmetry, it suffices to prove only one direction, so assume that (i) holds. To define the function $g,$ consider the equations \begin{equation} \label{eq:fg-equivalence}
        \sum_{\b \ge \a} f(\a, \b) g(\b) = \begin{cases}
            1 & \text{if } \ell(\a) = 1,\\
            0 & \text{otherwise}
        \end{cases}
    \end{equation} for $\a \in \Compne.$ By nonsingularity, $f(\a, \a) \neq 0$ for each $\a,$ so it follows from triangularity that \eqref{eq:fg-equivalence} uniquely determines $g \colon \Compne \to \bk.$ Additionally, when $\a = (n)$ we have $f(n) g(n) = 1,$ so $g(n) \neq 0$ and $g$ is also nonsingular. 
    
    To show that $g$ satisfies (ii), define $\til{R}_\a = \sum_{\b \ge \a} g(\a, \b) Q_\b$. Then for fixed $\a \in \Compne$, \[\begin{aligned}
        \sum_{\b \ge \a} f(\a,\b) \til{R}_\b
        &= \sum_{\b \ge \a} f(\a,\b) \sum_{\y \ge \b} g(\b, \y) Q_\y \\
        &= \sum_{\y \ge \a} \left(\sum_{\b : \, \a \le \b \le \y} f(\a,\b) g(\b,\y)\right) Q_\y.
    \end{aligned}\] For each $\y = (\y_1, \dots, \y_\ell) \ge \a,$ let $\a = \a^{(1)} \dotsm \a^{(\ell)}$ such that $\a^{(i)} \comp \y_i.$ Then the compositions $\b$ with $\a \le \b \le \y$ are exactly those of the form $\b = \b^{(1)} \dotsm \b^{(\ell)}$ where $\a^{(i)} \le \b^{(i)}.$ Hence, the sum in parentheses factors as \[\begin{aligned}
        \sum_{\b : \, \a \le \b \le \y} f(\a,\b) g(\b,\y)
        &= \prod_{i=1}^\ell \sum_{\b^{(i)} \ge \a^{(i)}} f(\a^{(i)}, \b^{(i)}) \, g(\b^{(i)}).
    \end{aligned}\] By \eqref{eq:fg-equivalence}, this product vanishes unless $\a^{(i)} = (\y_i)$ for each $i,$ or equivalently $\a = \y,$ in which case it equals $1.$ Therefore, we obtain \[
        \sum_{\b \ge \a} f(\a,\b) \til{R}_\b = Q_\y.
    \] Combined with (i), this forces $\til{R}_\a = R_\a$ (again by triangularity), completing the proof.
\end{proof}

From the above two results, it follows that a graded basis $\{X_\a\}$ is a deconcatenation basis if and only if there are nonsingular functions $f, g \colon \Compne \to \bk$ such that \begin{equation} \label{eq:shuffle-fg}
    X_\a = \sum_{\b \ge \a} f(\a, \b) M_\b
    \qquad
    \text{and}
    \qquad
    M_\a = \sum_{\b \ge \a} g(\a, \b) X_\b.
\end{equation}
Furthermore, the proof of Proposition~\ref{prop:fg-equivalence} shows that $g$ is uniquely determined by $f$ via \eqref{eq:fg-equivalence}, and vice versa with the roles of $f$ and $g$ switched.

It remains to determine what additional conditions on the functions $f$ and $g$ produce the multiplicative structure of $\{X_\a\}.$ To do this, we will need the universal properties of $\QSym$ (Proposition~\ref{prop:abs-universal}) and $\Sh$ (Theorem~\ref{thm:Sh-universal}).

\subsection{Characterizing shuffle bases}

Given a function $f \colon \Compne \to \bk,$ we define $f^\Sh$ to be the linear function $\Sh \to \bk$ given by $f^\Sh(x_\empty) = 1$ and $f^\Sh(x_\a) = f(\a)$ for $\a \neq \empty.$ Also, given $g \colon \Compne \to \bk,$ we define $g^\QSym$ to be the linear function $\QSym \to \bk$ given by $g^\QSym(M_\empty) = 0$ and $g^\QSym(M_\a) = g(\a)$ for $\a \neq \empty.$ 
\begin{thm} \label{thm:characterize-shuffle-bases}
    Let $\{X_\a\}$ be a deconcatenation basis, and let $f, g \colon \Compne \to \bk$ be nonsingular functions satisfying \eqref{eq:shuffle-fg}.
                Then the following are equivalent:
    \begin{enumerate}[(i)]
        \item $\{X_\a\}$ is a shuffle basis.

        \item $f^\Sh \in \mathbb{X}(\Sh).$
        
        \item $g^\QSym \in \Xi(\QSym).$
    \end{enumerate}
\end{thm}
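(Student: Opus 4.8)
The plan is to exploit the universal properties of $\Sh$ (Theorem~\ref{thm:Sh-universal}) and $\QSym$ (Proposition~\ref{prop:abs-universal}) to translate the multiplicativity condition on $\{X_\a\}$ into statements about $f^\Sh$ and $g^\QSym$. The key observation is that a choice of deconcatenation basis $\{X_\a\}$ with change-of-basis data $f,g$ as in \eqref{eq:shuffle-fg} determines two natural maps: a graded coalgebra map $\Sh \to \QSym$ sending $x_\a \mapsto X_\a$, and its "inverse-direction" coalgebra map $\QSym \to \Sh$ sending $M_\a \mapsto \sum_{\b \ge \a} g(\a,\b) x_\b$ (using that both are deconcatenation bases, so these maps respect comultiplication). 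Condition (i) says exactly that the first of these maps is an algebra isomorphism, equivalently a Hopf isomorphism $\Sh \to \QSym$.

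For the equivalence (i) $\iff$ (ii): first I would identify, for a deconcatenation basis $\{X_\a\}$, the linear map $\Psi \colon \QSym \to \Sh$ given by $M_\a \mapsto \sum_{\b\ge\a} g(\a,\b)\, x_\b$; one checks via \eqref{eq:monomial-deconcatenate}, \eqref{eq:shuffle-deconcatenate}, and the factorization $g(\a,\b) = \prod_i g(\a^{(i)})$ that $\Psi$ is a graded coalgebra map. Next I compute $\xi_S \circ \Psi$ on the monomial basis: since $\xi_S(x_\b) = 1$ iff $\ell(\b)=1$, we get $(\xi_S\circ\Psi)(M_\a) = \sum_{\b\ge\a,\ \ell(\b)=1} g(\a,\b)$, which equals $g(\a)$ when $\ell(\a)=1$ and is $0$ otherwise by the defining relations \eqref{eq:fg-equivalence} — in other words $\xi_S \circ \Psi = g^\QSym$ only after renormalizing $g(n)$, but more simply, $\Psi$ is by Theorem~\ref{thm:Sh-universal}(iii) precisely the unique coalgebra map realizing some infinitesimal character $\xi$ on $\QSym$, and that $\xi$ is computed from $\Psi$ on the $M_n$. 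The cleaner route: apply Proposition~\ref{prop:abs-universal}(i) to the combinatorial Hopf algebra $(\Sh, f^\Sh)$ when $f^\Sh$ is a character — the unique coalgebra map $\Sh \to \QSym$ with $\zeta_Q \circ (\,\cdot\,) = f^\Sh$ is, by the explicit formula in part (iii), exactly $x_\a \mapsto \sum_\b (\text{coefficient}) M_\b$, and matching coefficients against \eqref{eq:shuffle-fg} identifies this map as $x_\a \mapsto X_\a$; when $f^\Sh$ is a character this map is automatically an algebra map, giving (ii) $\Rightarrow$ (i). Conversely, if $\{X_\a\}$ is a shuffle basis then $\ph\colon \Sh \to \QSym$, $x_\a \mapsto X_\a$, is an algebra map, so $f^\Sh = \zeta_Q \circ \ph$ is a composite of algebra maps, hence a character, giving (i) $\Rightarrow$ (ii).

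The equivalence (i) $\iff$ (iii) is the dual story run through Theorem~\ref{thm:Sh-universal}: if $\{X_\a\}$ is a shuffle basis then $\ph^{-1}\colon \QSym \to \Sh$ is a Hopf map, and $g^\QSym = \xi_S \circ \ph^{-1}$ (checking this on the $M_\a$ via the defining relations of $g$) is an infinitesimal character since $\xi_S$ is; conversely, if $g^\QSym \in \Xi(\QSym)$, apply Theorem~\ref{thm:Sh-universal} to $(\QSym, g^\QSym)$ to get a graded Hopf map $\Psi\colon \QSym \to \Sh$, verify using part (iii) of that theorem and \eqref{eq:shuffle-fg} that $\Psi(M_\a) = \sum_{\b\ge\a} g(\a,\b) x_\b$, and then check that $\Psi$ is the two-sided inverse of the coalgebra isomorphism $x_\a \mapsto X_\a$ (this uses Proposition~\ref{prop:fg-equivalence}, i.e.\ that $f$ and $g$ are mutually inverse in the convolution-like sense of \eqref{eq:fg-equivalence}), so the latter is a Hopf isomorphism and (i) holds. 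I expect the main obstacle to be bookkeeping: carefully matching the explicit formulas from Proposition~\ref{prop:abs-universal}(iii) and Theorem~\ref{thm:Sh-universal}(iii) against the triangular change of basis \eqref{eq:shuffle-fg}, and verifying that the candidate inverse map really is inverse — which is exactly where the uniqueness clause in Proposition~\ref{prop:fg-equivalence} linking $f$ and $g$ does the work. A clean way to organize the whole argument is to prove (ii) $\Rightarrow$ (i) $\Rightarrow$ (iii) $\Rightarrow$ (i) $\Rightarrow$ (ii), or better, to show both (ii) and (iii) are each equivalent to the single assertion that $x_\a \mapsto X_\a$ is a Hopf algebra isomorphism.
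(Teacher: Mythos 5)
Your proposal is correct and follows essentially the same route as the paper: you identify $\ph(x_\a)=X_\a$ with the unique universal map out of $(\Sh,f^\Sh)$ (resp.\ identify $\ph^{-1}$ with the unique universal map out of $(\QSym,g^\QSym)$), so the automatic algebra-map clauses of Proposition~\ref{prop:abs-universal} and Theorem~\ref{thm:Sh-universal} give (ii)$\Rightarrow$(i) and (iii)$\Rightarrow$(i), while (i)$\Rightarrow$(ii),(iii) follows because $f^\Sh=\zeta_Q\circ\ph$ and $g^\QSym=\xi_S\circ\ph^{-1}$ are composites of a (infinitesimal) character with an algebra map, exactly as in the paper. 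One small correction to the aside you then abandon: $(\xi_S\circ\Psi)(M_\a)=\sum_{\b\ge\a,\,\ell(\b)=1}g(\a,\b)=g(\a,(|\a|))=g(\a)$ for \emph{every} nonempty $\a$, since $(|\a|)$ is the only length-one coarsening of $\a$, so $\xi_S\circ\Psi=g^\QSym$ holds exactly and no renormalization is needed.
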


\begin{proof}
    Let $\ph\colon \Sh \to \QSym$ be the linear map defined by $\ph(x_\a) = X_\a.$  Recall that $\{X_\a\}$ is a shuffle basis if and only if $\ph$ is a Hopf isomorphism. Since $\{X_\a\}$ is a deconcatenation basis, $\ph$ is automatically a coalgebra isomorphism. 

    For $\a \neq \empty,$ we have $\ph(x_\a) = X_\a = \sum_{\b \ge \a} f(\a, \b) M_\b,$ so
    \[
        \zeta_Q \circ \ph(x_\a) = \sum_{\b \ge \a} f(\a, \b) \zeta_Q(M_\b) = f(\a) = f^\Sh(x_\a).
    \] This also holds for $\a = \empty$ since $f(\empty) = 1$; thus, 
    \begin{equation} \label{eq:Sh-equivalence-fSh}
        f^\Sh = \zeta_Q \circ \ph.
    \end{equation}
    Similarly, for $\a \neq \empty,$ we have $\ph^{-1}(M_\a) = \ph^{-1}\left(\sum_{\b \ge \a} g(\a,\b) X_\b\right) = \sum_{\b \ge \a} g(\a,\b) x_\b,$ so
    \[
        \xi_S \circ \ph^{-1}(M_\a) = \sum_{\b \ge \a} g(\a,\b) \xi_S(x_\b) = g(\a) = g^{\QSym}(M_\a).
    \] This also holds for $\a = \empty$ since $g(\empty) = 0$; thus,
    \begin{equation} \label{eq:Sh-equivalence-gQSym}
        g^\QSym = \xi_S \circ \ph^{-1}.
    \end{equation}
    
    Now suppose that $\{X_\a\}$ is a shuffle basis. Then $\ph$ and $\ph^{-1}$ are algebra maps, so it follows from \eqref{eq:Sh-equivalence-fSh} and \eqref{eq:Sh-equivalence-gQSym} that $f^\Sh \in \bX(\Sh)$ and $g^\QSym \in \Xi(\QSym).$ This shows that (i) implies (ii) and (iii).
    
    Suppose that (ii) holds. Then $(\Sh, f^\Sh)$ is a combinatorial Hopf algebra, so by \eqref{eq:Sh-equivalence-fSh} and Proposition~\ref{prop:abs-universal}, $\ph$ is the unique map of combinatorial Hopf algebras $(\Sh, f^\Sh) \to (\QSym, \zeta_Q)$. Therefore $\{X_\a\}$ is a shuffle basis. This shows that (ii) implies (i).

    Similarly, suppose that (iii) holds. Then $(\QSym, g^\QSym)$ is an infinitorial Hopf algebra, so by \eqref{eq:Sh-equivalence-gQSym} and Theorem~\ref{thm:Sh-universal}, $\ph^{-1}$ is the unique map of infinitorial Hopf algebras $(\QSym, g^\QSym) \to (\Sh, \xi_S).$ Therefore $\{X_\a\}$ is again a shuffle basis. This shows that (iii) implies (i), completing the proof.
\end{proof}

In view of this result, we make the following definitions:

\begin{defn} \leavevmode
    \begin{enumerate}[(i)]
        \item A \emph{shuffle character} is a map $f \colon \Compne \to \bk$ such that $f^\Sh \in \bX(\Sh).$
        \item A \emph{quasisymmetric infinitesimal character} is a map $g \colon \Compne \to \bk$ such that $g^\QSym \in \Xi(\QSym).$
    \end{enumerate}
\end{defn}

Equivalently, $f$ is a shuffle character if and only if $f(\a) f(\b) = \sum_{\y \in \a \sh \b} f(\y)$ for all compositions $\a$ and $\b,$ where we define $f(\empty)$  to be $1.$ (One can likewise describe an analogous condition for quasisymmetric infinitesimal characters, but we will not need this.) We have thus shown that the following objects are in bijection:

\begin{itemize}
    \item Isomorphisms of graded Hopf algebras $\Sh \to \QSym$;
    \item Shuffle bases $\{X_\a\}$ of $\QSym$;
    \item Nonsingular shuffle characters $f \colon \Compne \to \bk$;
    \item Nonsingular quasisymmetric infinitesimal characters $g \colon \Compne \to \bk$.
\end{itemize}

\begin{remark}
    There do not exist nonsingular shuffle characters in positive characteristic. Indeed, if $\bk$ has characteristic $p > 0,$ then \[
        f(1)^p = \sum_{\a \in \underbrace{\scriptstyle 1 \sh \dotsm \sh 1}_{p}} f(\a) = p! f(\underbrace{1 \ldots 1}_p) = 0,
    \] which forces $f(1) = 0.$ (Similarly, $f(n) = 0$ for \emph{all} $n \in \Zpos.$) It follows that $\Sh$ and $\QSym$ are not isomorphic in positive characteristic.
\end{remark}

\section{Shuffle characters and quasisymmetric power sums} \label{sec:shuffle-qps}

In this section, we introduce quasisymmetric power sums and show how they are closely related to shuffle bases. Then, using the results of the previous section, we present constructions of quasisymmetric power sums which recover several bases found in the literature. For the rest of this paper, we assume that $\bk$ has characteristic zero.

The definition of a quasisymmetric power sum comes from \cite[Proposition 4.1]{awvw} as a result of dualizing the definition of a \emph{noncommutative} power sum. These noncommutative power sums, in turn, are defined by lifting the algebraic properties of the symmetric power sums $\{p_\lambda\}$ from $\Sym$ to the algebra of noncommutative symmetric functions $\NSym,$ which is the graded dual of $\QSym.$ (We are largely able to avoid the use of duality in this paper; as such, we give the following definition without any reference to $\NSym.$)

\begin{defn} \label{def:qps}
    A \emph{quasisymmetric power sum} (QPS) basis is a graded basis $\{P_\a\}$ of $\QSym$ that satisfies the following conditions:
    
    \begin{enumerate}[(i)]
        \item (Multiplication) $P_\a P_\b = \frac{z_\a z_\b}{z_{\a\b}} \sum_{\y \in \a \sh \b} P_\y$ for all compositions $\a, \b$;

        \item (Comultiplication) $\Delta(P_\a) = \sum_{\b\y = \a} \frac{z_\a}{z_\b z_\y} P_\b \tensor P_\y$ for all compositions $\a$;

        \item (Power sum refinement) $\sum_{\a \sim \lambda} P_\a = p_\lambda$ for every partition $\lambda.$
    \end{enumerate}
\end{defn}

The first two conditions of Definition~\ref{def:qps} are, up to scaling, exactly those that define a shuffle basis. However, as we will now show, the third condition comes for free under the correct choice of scaling.

\begin{defn} \leavevmode
    \begin{enumerate}[(i)]
        \item A function $f \colon \Compne \to \bk$ is \emph{normalized} if $f(n) = 1$ for all $n \in \Zpos.$
        
        \item A shuffle basis $\{X_\a\}$ is \emph{normalized} if $X_n = M_n$ for all $n \in \Zpos.$
    \end{enumerate}
\end{defn}

It is easy to check that a shuffle basis $\{X_\a\}$ is normalized if and only if the corresponding shuffle character $f$ is normalized, if and only if the corresponding quasisymmetric infinitesimal character $g$ is normalized.

The next result states that QPS bases are, in fact, equivalent to normalized shuffle bases up to a consistent choice of scaling.

\begin{prop}
    A graded basis $\{P_\a\}$ of $\QSym$ is a QPS basis if and only if the basis $\{P_\a'\}$ given by $P_\a' = \frac{1}{\aut(\a)} P_\a$ is a normalized shuffle basis.
\end{prop}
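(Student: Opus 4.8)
The plan is to verify the three defining conditions of a QPS basis for $\{P_\a\}$ by translating them, via the substitution $P_\a = \aut(\a) P_\a'$, into the shuffle-basis conditions \eqref{eq:shuffle-basis-mult} and \eqref{eq:shuffle-basis-comult} for $\{P_\a'\}$ together with the normalization $P_n' = M_n$. The key arithmetic fact is that $z_\a = \p(\a)\aut(\a)$ and that $\p$ is multiplicative under both shuffle and concatenation (since shuffling and concatenating only rearrange parts): for $\y \in \a \sh \b$ or $\y = \a\b$ we have $\p(\y) = \p(\a)\p(\b)$. Hence $\frac{z_\a z_\b}{z_{\a\b}} = \frac{\aut(\a)\aut(\b)}{\aut(\a\b)}$ and likewise $\frac{z_\a}{z_\b z_\y} = \frac{\aut(\a)}{\aut(\b)\aut(\y)}$ whenever $\y \in \a \sh \b$ or $\a = \b\y$ — so the $\p$ factors cancel and all the scaling constants in Definition~\ref{def:qps} are really just $\aut$-ratios.

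First I would do the forward direction. Assuming $\{P_\a\}$ is a QPS basis, condition (i) reads $P_\a P_\b = \frac{\aut(\a)\aut(\b)}{\aut(\a\b)} \sum_{\y \in \a\sh\b} P_\y$ after the $\p$-cancellation; dividing through by $\aut(\a\b) = $ the common value of $\aut(\y)$ for $\y \in \a \sh \b$ (again because $\y \sim \a\b$), and using $P_\a' = P_\a/\aut(\a)$, gives exactly $P_\a' P_\b' = \sum_{\y \in \a\sh\b} P_\y'$. The same manipulation turns (ii) into $\Delta(P_\a') = \sum_{\b\y = \a} P_\b' \tensor P_\y'$. For normalization, set $\lambda = (n)$ in condition (iii): the only composition $\a \sim (n)$ is $(n)$ itself, so $P_n = p_n = M_n$, and since $\aut((n)) = 1$ we get $P_n' = M_n$. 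Thus $\{P_\a'\}$ is a normalized shuffle basis.

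For the converse, assume $\{P_\a'\}$ is a normalized shuffle basis and set $P_\a = \aut(\a) P_\a'$. Reversing the computations above recovers conditions (i) and (ii) of Definition~\ref{def:qps}. The only genuinely new thing to prove is the power sum refinement (iii), and this is the step I expect to be the main obstacle. Here is how I would get it: apply the normalized shuffle-basis multiplication rule iteratively to $P_{\lambda_1}' \dotsm P_{\lambda_\ell}'$, where $\lambda = (\lambda_1, \dots, \lambda_\ell)$; since $P_{\lambda_i}' = M_{\lambda_i}$ and $p_\lambda = \prod_i M_{\lambda_i}$, we get $p_\lambda = P_{\lambda_1}' \dotsm P_{\lambda_\ell}' = \sum_{\y} (\text{mult. of } \y) P_\y'$, where $\y$ ranges over the iterated shuffle $\lambda_1 \sh \dotsm \sh \lambda_\ell$ of the one-part compositions. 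A composition $\y$ of length $\ell$ appears in this iterated shuffle precisely when $\y \sim \lambda$, and when it does, the number of times it appears is the number of ways to biject the $\ell$ slots of $\y$ with the $\ell$ factors order-compatibly within equal parts — which is exactly $\aut(\lambda) = \aut(\y)$. (This is the orbit–stabilizer count already invoked in Section~\ref{sec:compfunctions}: there are $\ell!/\aut(\lambda)$ distinct rearrangements, each hit $\aut(\lambda)$ times, for a total of $\ell!$ shuffle terms.) Therefore $p_\lambda = \sum_{\y \sim \lambda} \aut(\y) P_\y' = \sum_{\y \sim \lambda} P_\y$, which is condition (iii).

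The routine verifications are the two $\p$-cancellation identities and the bookkeeping that $\aut(\y)$ is constant on the shuffle set $\a \sh \b$; the one step needing a real (though short) combinatorial argument is identifying the multiplicity of $\y$ in the iterated shuffle $\lambda_1 \sh \dotsm \sh \lambda_\ell$ as $\aut(\lambda)$, for which the orbit–stabilizer framing makes it clean.
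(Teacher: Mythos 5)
Your proposal is correct and follows essentially the same route as the paper: cancel the $\p$ factors so the scaling constants in Definition~\ref{def:qps} become $\aut$-ratios (making conditions (i) and (ii) equivalent to the shuffle-basis axioms for $\{P_\a'\}$), get normalization from (iii) with $\lambda=(n)$, and recover (iii) in the converse by expanding $P'_{\lambda_1}\dotsm P'_{\lambda_\ell}$ as an iterated shuffle, with each $\y\sim\lambda$ occurring $\aut(\lambda)$ times. Your treatment of the shuffle multiplicity via orbit--stabilizer is exactly the count the paper uses implicitly.
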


\begin{proof}
    Because $z_\a = \aut(\a) \p(\a),$ it follows that $\f{z_\a z_\b}{z_{\a\b}} = \f{\aut(\a) \aut(\b)}{\aut(\a\b)}$ for all $\a$ and $\b.$ Thus, after rearranging, we find that conditions (i) and (ii) of Definition~\ref{def:qps} are equivalent to $\{P_\a'\}$ being a shuffle basis.
    
    It remains to show that condition (iii) is equivalent to $\{P_\a'\}$ being normalized. Taking $\lambda = (n)$ in condition (iii), we find that $P_n' = P_n = p_n = M_n,$ so $\{P_\a'\}$ is indeed normalized. Conversely, suppose that $\{P_\a'\}$ is a normalized shuffle basis. Then for any partition $\lambda$ of length $\ell$,
    \[p_\lambda = P'_{\lambda_1} \cdots P'_{\lambda_\ell} = \sum_{\alpha \in \lambda_1 \sh \cdots \sh \lambda_\ell} P'_{\a} = \aut(\a) \sum_{\alpha \sim \lambda} P'_\a = \sum_{\alpha \sim\lambda} P_\a,\]
    so (iii) is satisfied.
\end{proof}

Thus, every QPS basis $\{P_\a\}$ satisfies \begin{equation} \label{eq:qps-shuffle-char}
    P_\a = \aut(\a) \sum_{\b \ge \a} f(\a, \b) M_\b
\end{equation} where $f$ is a normalized shuffle character, and \begin{equation} \label{eq:qps-qic}
    M_\a = \sum_{\b \ge \a} g(\a, \b) \f{1}{\aut(\b)} P_\b
\end{equation} where $g$ is a normalized quasisymmetric infinitesimal character. Conversely, any choice of $f$ or choice of $g$ produces a QPS basis by the above formulas.

Furthermore, let $f$ be a shuffle character (not necessarily normalized). If $h \colon \Zpos \to \bk$ is any map, then one can check that $\tilde{f}(\a) = \prod_{i=1}^{\ell(\a)} h(\a_i) \cdot f(\a)$ also defines a shuffle character. In particular, if $f$ is nonsingular, then it can be normalized by taking $h(n) = \f{1}{f(n)}.$

This leads to a general method for constructing quasisymmetric power sums: construct a (nonsingular) shuffle character $f$, normalize it as above, then use \eqref{eq:qps-shuffle-char}.\footnote{
    Using \eqref{eq:qps-qic}, one can also construct QPS bases by constructing quasisymmetric infinitesimal characters and normalizing them. However, we do not yet know of any general such constructions.
} Using this method, we now present two general constructions of shuffle characters, which we use to recover four QPS bases from the literature.

\subsection{The prefix sum construction} \label{prefix-sum}

\begin{thm} \label{thm:prefix-sum}
    Let $\tau \colon \Zpos \to \bk$ be any function. Then the map $f \colon \Compne \to \bk$ defined by \[
        f(\a) = \prod_{i=1}^{\ell(\a)} \left(\tau(\a_1) + \dotsm + \tau(\a_i)\right)^{-1}
    \] is a shuffle character, assuming that the sums $\tau(\a_1) + \dots + \tau(\a_i)$ are all nonzero.
\end{thm}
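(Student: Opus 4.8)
The plan is to verify directly that $f$ satisfies the shuffle character condition: $f(\alpha)f(\beta) = \sum_{\gamma \in \alpha \sh \beta} f(\gamma)$ for all nonempty compositions $\alpha, \beta$ (with $f(\empty) = 1$). The key observation is that $f$ has a ``prefix sum'' structure — if we set $T_i = \tau(\alpha_1) + \dots + \tau(\alpha_i)$ for the prefixes of $\alpha$, then $f(\alpha) = \prod_i T_i^{-1}$, and analogously for $\beta$ and $\gamma$. Since this is an identity purely about the function $\tau$ evaluated on partial sums, I would first reduce to the universal case: replace $\tau(\alpha_i)$ by formal indeterminates, so that it suffices to prove the identity in a polynomial (or rational function) ring where no denominator issues arise, and then specialize.

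First I would set up the right combinatorial framework. A shuffle $\gamma \in \alpha \sh \beta$ corresponds to an interleaving: a way of merging the sequence of parts of $\alpha$ with the sequence of parts of $\beta$ preserving the internal order of each. It is cleaner to think of this via the ``staircase'' or lattice-path picture: an interleaving of a length-$k$ word with a length-$\ell$ word is a monotone lattice path from $(0,0)$ to $(k,\ell)$, and at each of the $k+\ell$ steps one appends either the next part of $\alpha$ (an east step) or the next part of $\beta$ (a north step). The crucial point is that the $i$-th prefix sum of $\gamma$, at a lattice point $(a,b)$ reached after $a$ east steps and $b$ north steps, equals $T^{\alpha}_a + T^{\beta}_b$ where $T^\alpha_a$ and $T^\beta_b$ are prefix sums of $\tau$ along $\alpha$ and $\beta$ respectively. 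So $f(\gamma) = \prod_{\text{points } (a,b) \text{ on the path, excluding }(0,0)} (T^\alpha_a + T^\beta_b)^{-1}$, and $\sum_{\gamma \in \alpha\sh\beta} f(\gamma)$ becomes a sum over all monotone lattice paths of products of such reciprocals.

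The main step is then to prove the identity
\[
    \left(\prod_{a=1}^k \frac{1}{T^\alpha_a}\right)\left(\prod_{b=1}^\ell \frac{1}{T^\beta_b}\right) = \sum_{\text{paths } (0,0) \to (k,\ell)} \ \prod_{\substack{(a,b) \text{ on path} \\ (a,b) \neq (0,0)}} \frac{1}{T^\alpha_a + T^\beta_b},
\]
which I would establish by induction on $k + \ell$. Deleting the last step of a path (either $(k-1,\ell)\to(k,\ell)$ or $(k,\ell-1)\to(k,\ell)$) splits the right-hand sum into two pieces, each of which, by the inductive hypothesis applied to the prefix $(\alpha_1,\dots,\alpha_{k-1})$ or $(\beta_1,\dots,\beta_{\ell-1})$, equals $\frac{1}{T^\alpha_k + T^\beta_\ell}$ times a product of the form on the left but missing one factor. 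Collecting, one needs the elementary rational-function identity
\[
    \frac{1}{T^\alpha_k + T^\beta_\ell}\left(\frac{1}{T^\alpha_k} + \frac{1}{T^\beta_\ell}\right) \cdot \prod_{a=1}^{k-1}\frac{1}{T^\alpha_a}\prod_{b=1}^{\ell-1}\frac{1}{T^\beta_b}\cdot (\text{correction}) = \prod_{a=1}^k\frac{1}{T^\alpha_a}\prod_{b=1}^\ell\frac{1}{T^\beta_b},
\]
which reduces to $\frac{1}{T^\alpha_k} + \frac{1}{T^\beta_\ell} = \frac{T^\alpha_k + T^\beta_\ell}{T^\alpha_k T^\beta_\ell}$. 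The base case $k = 0$ or $\ell = 0$ is immediate since there is a unique (empty-in-one-direction) path. Finally, since the identity holds as an identity of rational functions in the $\tau$ values, and we assumed all the relevant prefix sums $T^\alpha_a + T^\beta_b$ (equivalently all prefix sums of $\tau$ along $\alpha$, $\beta$, and every shuffle $\gamma$) are nonzero, specializing $\tau$ to its actual $\bk$-values is valid and gives $f(\alpha)f(\beta) = \sum_{\gamma} f(\gamma)$.

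I expect the main obstacle to be bookkeeping rather than conceptual: making the lattice-path reformulation precise — in particular checking carefully that the multiset $\alpha \sh \beta$ (with multiplicities) corresponds bijectively to the set of monotone lattice paths, so that equal shuffles coming from different interleavings are correctly counted — and verifying that the prefix sums of $\gamma$ factor as $T^\alpha_a + T^\beta_b$. Once that dictionary is set up, the induction is a short telescoping argument, and the nonvanishing hypothesis is exactly what licenses passing from the formal identity back to $\bk$.
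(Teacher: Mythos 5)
Your proposal is correct and takes essentially the same route as the paper: your induction on $k+\ell$, removing the last step of the lattice path, is exactly the paper's decomposition of each shuffle as $\delta\alpha_k$ or $\delta\beta_\ell$, and your closing identity $\frac{1}{T^\alpha_k}+\frac{1}{T^\beta_\ell}=\frac{T^\alpha_k+T^\beta_\ell}{T^\alpha_k T^\beta_\ell}$ is the paper's final cancellation $(s+t)^{-1}\left(s\,f(\alpha)f(\beta)+t\,f(\alpha)f(\beta)\right)=f(\alpha)f(\beta)$. The lattice-path dictionary and the passage to formal indeterminates are harmless packaging; the paper's proof works directly over $\bk$ under the stated nonvanishing hypothesis.
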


\begin{proof}
    We prove that $f(\a) f(\b) = \sum_{\y \in \a \sh \b} f(\y)$ by induction on $\ell(\a) + \ell(\b).$ The base cases $\a=\empty$ and $\b=\empty$ are trivial. Next, fix nonempty compositions $\a = (\a_1, \dots, \a_k)$ and $\b = (\b_1, \dots, \b_\ell),$ and let $\a' = (\a_1, \dots, \a_{k-1})$ and $\b' = (\b_1, \dots, \b_{\ell-1}).$ Every shuffle $\y \in \a \sh \b$ is either a concatenation $\delta\a_k$ for $\delta \in \a' \sh \b,$ or a concatenation $\delta\b_\ell$ for $\delta \in \a \sh \b'$. Thus, letting $s = \tau(\a_1) + \dots + \tau(\a_k)$ and $t = \tau(\b_1) + \dots + \tau(b_\ell)$, we have \A
        \sum_{\y \in \a \sh \b} f(\y)
        = \sum_{\delta \in \a' \sh \b} f(\delta \a_k) + \sum_{\delta \in \a \sh \b'} f(\delta \b_\ell)
        &= (s + t)^{-1} 
            \left( \sum_{\delta \in \a' \sh \b} f(\delta) + \sum_{\delta \in \a \sh \b'} f(\delta) \right) \\
        &= (s + t)^{-1} \left(f(\a') f(\b) + f(\a) f(\b')\right) \\
        &= (s + t)^{-1} \left(s f(\a) f(\b) + t f(\a) f(\b)\right) \\
        &= f(\a)f(\b),
    \B as desired.
\end{proof}

Using the prefix sum construction, we can recover the type I and type II quasisymmetric power sums as described below. The duals of these bases were introduced in \cite{gelfand}, and these bases were the main objects of study in \cite{bdhmn}, where Ballantine, Daugherty, Hicks, Mason, and Niese gave combinatorial proofs that they indeed satisfy Definition~\ref{def:qps}.

\begin{example} \label{ex:typeI}
    Taking $\tau(n) = n$ in Theorem~\ref{thm:prefix-sum}, we obtain the shuffle character \[
        f(\a)
        = \prod_{i=1}^{\ell(\a)} (\a_1 + \dots + \a_i)^{-1} = \f{1}{\pi(\a)},
    \] where we define $\pi(\a) = \a_1 (\a_1 + \a_2) \dotsm (\a_1 + \dots + \a_\ell).$ We have $f(n) = \frac{1}{n}$ for $n \in \Zpos,$ so $f$ normalizes to $\tilde{f}(\a) = \prod_{i=1}^{\ell(\a)} \a_i \cdot f(\a) = \f{\p(\a)}{\pi(\a)}.$ Hence, the corresponding quasisymmetric power sum is \[
        \Psi_\a = \aut(\a) \sum_{\b \ge \a} \f{\p(\a,\b)}{\pi(\a, \b)} M_\b = z_\a \sum_{\b \ge \a} \f{1}{\pi(\a,\b)} M_\b
    \] (since $\p(\a,\b) = \p(\a)$). These are the \emph{type I quasisymmetric power sums} $\{\Psi_\a\},$ as given in \cite[\S 3.1]{bdhmn}.
\end{example}

\begin{example} \label{ex:typeII}
    Taking $\tau(n) = 1$ in Theorem~\ref{thm:prefix-sum}, we obtain the shuffle character \[
        f(\a) = \prod_{i=1}^{\ell(\a)} i^{-1} = \f{1}{\ell(\a)!}.
    \]
    This $f$ is already normalized, so the corresponding quasisymmetric power sum is \[
        \Phi_\a = \aut(\a) \sum_{\b \ge \a} \f{1}{\ell^!(\a, \b)} M_\b.
    \] (Here $\ell^!$ means the function $\ell^!(\a) = \ell(\a)!.$) These are the \emph{type II quasisymmetric power sums} $\{\Phi_\a\},$ as given in \cite[\S 3.2]{bdhmn}.
\end{example}

In Sections~\ref{sec:typeI} and \ref{sec:typeII}, we consider the bijections between characters and infinitesimal characters induced by these two bases. As a consequence, we will rederive known formulas for the corresponding quasisymmetric infinitesimal characters.

\subsection{The ordered partition construction} \label{section-ordered-partition}

Next, we present a construction that ``blends together'' already constructed shuffle characters.

We define an \emph{ordered partition} of $\Zpos$ to be a pair $(\mathscr{C}, \prec),$ where $\mathscr{C}$ is a set partition of $\Zpos$ and $\prec$ is a total order on the elements of $\mathscr{C}.$ We say that a composition $\a = (\a_1, \dots, \a_\ell)$ \emph{respects} $(\mathscr{C}, \prec)$ if $C_1 \preceq \dotsm \preceq C_\ell,$ where $C_i$ is the (unique) element of $\mathscr{C}$ containing $\a_i.$ Finally, given a composition $\a$ and a set $C \subseteq \Zpos,$ define the composition $\a|_C$ to be the subsequence of $\a$ consisting of the parts of $\a$ that lie in $C,$ or $\empty$ if there are no such parts.

\begin{thm} \label{thm:ordered-partition}
    Let $(\mathscr{C}, \prec)$ be an ordered partition of $\Zpos.$ For each $C \in \mathscr{C},$ choose a shuffle character $f_C \colon \Compne \to \bk.$ Then the map $f \colon \Compne \to \bk$ defined by \begin{equation} \label{eq:ordered-partition}
        f(\a) = \begin{cases}
            \prod_{C \in \mathscr{C}} f_C\left(\a|_C\right) & \text{if } \a \text{ respects } (\mathscr{C}, \prec), \\
            0 & \text{otherwise}
        \end{cases}
    \end{equation} (and $f(\empty) = 1$) is a shuffle character.
\end{thm}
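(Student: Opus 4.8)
The plan is to verify the shuffle character identity $f(\a)f(\b) = \sum_{\y \in \a \sh \b} f(\y)$ directly, exploiting the multiplicativity built into \eqref{eq:ordered-partition} to reduce to a single block $C$, where the hypothesis that each $f_C$ is a shuffle character finishes the job. First I would observe that if $\a$ respects $(\mathscr{C}, \prec)$, then $\a$ is determined by the tuple of subsequences $(\a|_C)_{C \in \mathscr{C}}$ together with the total order $\prec$: the parts in each block $C$ appear consecutively, and the blocks appear in $\prec$-order. Consequently, for any $\y \in \a \sh \b$, the restriction satisfies $\y|_C \in \a|_C \sh \b|_C$ (shuffling preserves the relative order of the subsequence coming from each argument, and any part of $\y$ in $C$ came from either $\a$ or $\b$).

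The key step is to show that the map $\y \mapsto (\y|_C)_{C \in \mathscr{C}}$ is a bijection, with multiplicity, from the set of shuffles $\y \in \a \sh \b$ that respect $(\mathscr{C},\prec)$ onto $\prod_{C \in \mathscr{C}} (\a|_C \sh \b|_C)$. One direction is immediate from the previous paragraph. For the reverse, given a choice of $\delta_C \in \a|_C \sh \b|_C$ for each $C$, concatenate the $\delta_C$ in $\prec$-order to form a composition $\y$; this $\y$ respects $(\mathscr{C}, \prec)$ by construction, and one checks that $\y \in \a \sh \b$ because within each block the interleaving of $\a$ and $\b$ parts is recorded by $\delta_C$, and across blocks the order is forced (all $\a$-parts in $C_i$ precede all $\a$-parts in $C_j$ whenever $C_i \prec C_j$, since $\a$ itself respects $(\mathscr{C},\prec)$ — and similarly for $\b$). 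The multiplicities match because a shuffle of $\a$ and $\b$ with a repeated value can arise in several ways, but this ambiguity is already present, and accounted for, inside each $\a|_C \sh \b|_C$.

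With the bijection in hand, the computation is routine: if either $\a$ or $\b$ fails to respect $(\mathscr{C}, \prec)$, then so does every $\y \in \a \sh \b$ (a part of $\a$ or $\b$ violating the block order still violates it in $\y$), so both sides are $0$. If both $\a$ and $\b$ respect $(\mathscr{C}, \prec)$, then
\[
    \sum_{\y \in \a \sh \b} f(\y)
    = \sum_{\substack{\y \in \a \sh \b \\ \y \text{ respects } (\mathscr{C}, \prec)}} \prod_{C \in \mathscr{C}} f_C(\y|_C)
    = \prod_{C \in \mathscr{C}} \sum_{\delta \in \a|_C \sh \b|_C} f_C(\delta)
    = \prod_{C \in \mathscr{C}} f_C(\a|_C) f_C(\b|_C) = f(\a) f(\b),
\]
where the second equality is the bijection, the third uses that each $f_C$ is a shuffle character (and that $\a|_C \sh \b|_C$ is a single element $\{\empty\}$ when both restrictions are empty, consistent with $f_C(\empty) = 1$), and the last regroups the product.

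The main obstacle is the bijection-with-multiplicity claim, since shuffle sets are genuine multisets and one must be careful that concatenating the $\delta_C$ recovers exactly the right multiset of shuffles — in particular that no shuffle respecting $(\mathscr{C},\prec)$ is missed and none is overcounted. I expect the cleanest way to handle this is to phrase $\a \sh \b$ in terms of the standard model: shuffles of $\a$ and $\b$ correspond to subsets $S \subseteq \{1, \dots, k+\ell\}$ of size $k$ (positions occupied by the parts of $\a$), and then argue that when $\a$ and $\b$ both respect $(\mathscr{C},\prec)$, such position-sets are in bijection with tuples of position-sets, one per block, via restriction; the respecting condition on $\y$ is then automatic. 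This sidesteps multiset bookkeeping by working with labeled positions throughout and only passing to the multiset of underlying compositions at the end.
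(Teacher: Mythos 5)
Your proof is correct and follows essentially the same route as the paper: show both sides vanish when $\a$ or $\b$ fails to respect $(\mathscr{C},\prec)$, then identify the respecting shuffles $\y \in \a \sh \b$ with tuples of block-wise shuffles $(\y|_C) \in \prod_C (\a|_C \sh \b|_C)$ so that the sum factors and the shuffle-character property of each $f_C$ applies. The only difference is that you spell out the multiplicity bookkeeping (via position-sets) that the paper asserts in one line, which is a fine addition but not a different argument.
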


\begin{proof}
    Fix compositions $\a$ and $\b$; we prove that 
        $f(\a) f(\b) = \sum_{\y \in \a \sh \b} f(\y)$.
    If $\a$ does not respect $(\mathscr{C}, \prec),$ then no composition that contains $\a$ as a subsequence can respect $(\mathscr{C}, \prec)$ either, so both sides of this equation vanish, and similarly for $\b$. Therefore, we may assume that $\a$ and $\b$ both respect $(\mathscr{C}, \prec)$. Let $\a = \a|_{C_1} \dotsm \a|_{C_r}$ and $\b = \b|_{C_1} \dotsm \b|_{C_r}$ for some $C_1 \prec \dotsm \prec C_r \in \mathscr{C}.$ In this case, the shuffles $\y \in \a \sh \b$ that respect $(\mathscr{C}, \prec)$ are exactly those of the form $\y = \y^{(1)} \dotsm \y^{(r)},$ where $\y^{(i)} \in \a|_{C_i} \sh \b|_{C_i}$ for each $i.$ It follows that \A
        \sum_{\y \in \a \sh \b} f(\y)
        = \sum_{\y^{(1)}, \dots, \y^{(r)}} f_{C_1}(\y^{(1)}) \dotsm f_{C_r}(\y^{(r)})
        &= \prod_{i=1}^r \sum_{\y^{(i)}} f_{C_i}(\y^{(i)}) \\
        &= \prod_{i=1}^r f_{C_i}(\a|_{C_i}) f_{C_i}(\b|_{C_i}) \\
        &= f(\a) f(\b),
    \B as desired.
\end{proof}

\begin{remark}
    Note that in this construction, each shuffle character $f_C$ need only be defined for compositions whose parts are in $C.$ Conversely, if $f_C$ is only defined for such compositions, then it can be extended to a shuffle character on all of $\Comp$ by simply defining $f_C(\a) = 0$ for all other compositions.
\end{remark}

\begin{remark} \label{rmk:small-support}
    In a sense, the ordered partition construction is the only way to produce shuffle characters with such small support. Indeed, let $(\mathscr{C}, \prec)$ be an ordered partition of $\Zpos,$ and suppose $f$ is a shuffle character such that $f(\a) = 0$ unless $\a$ respects $(\mathscr{C}, \prec).$ Then \eqref{eq:ordered-partition} is satisfied simply by taking $f_C = f$ for each $C \in \mathscr{C}.$ To see this, suppose $\a$ respects $(\mathscr{C}, \prec),$ and let $\a = \a|_{C_1} \dotsm \a|_{C_r}$ for some $C_1 \prec \dotsm \prec C_r \in \mathscr{C}.$ Then \[
        f(\a|_{C_1}) \dotsm f(\a|_{C_r}) = \sum_{\b \in \a|_{C_1} \sh \dotsm \sh \a|_{C_r}} f(\b) = f(\a)
    \] as claimed, because $\a$ is the only shuffle of $\a|_{C_1}, \dots, \a|_{C_r}$ that respects $(\mathscr{C}, \prec).$
\end{remark}

Using the ordered partition construction, we can recover two more QPS bases from the literature, the even-odd and combinatorial quasisymmetric power sums.

\begin{example} \label{ex:even-odd-basis}
    In Theorem~\ref{thm:ordered-partition}, let the ordered partition $(\mathscr{C}, \prec)$ be given by $\mathbb{E} \prec \mathbb{O},$ where $\mathbb{E} = \{2, 4, 6, \dots\}$ and $\mathbb{O} = \{1, 3, 5, \dots\}.$ We take $f_\mathbb{E}(\a) = f_\mathbb{O}(\a) = \frac{1}{\ell(\a)!},$ the shuffle character from Example~\ref{ex:typeII}. This gives the shuffle character \[
        f(\a) = \begin{cases}
            \frac{1}{\even(\a)! \odd(\a)!} & \text{if } \a = \a|_\mathbb{E} \, \a|_\mathbb{O}, \\
            0 & \text{otherwise},
        \end{cases}
    \] where $\even(\a)$ and $\odd(\a)$ are the number of even parts and odd parts of $\a,$ respectively. Because $f$ is normalized, the corresponding quasisymmetric power sum is given by \[
        \mathfrak{e}_\a = \aut(\a) \sum_{\b \ge_{\text{eo}} \a} \frac{1}{\even^!(\a,\b) \odd^!(\a,\b)} M_\b,
    \] where $\a \le_{\text{eo}} \b$ if $\a \le \b$ and every $\a^{(i)} \vDash \b_i$ consists of even parts followed by odd parts.
    
    We call this basis the \emph{even-odd quasisymmetric power sums}. It was defined, up to scaling, by Aliniaeifard and Li \cite{al}. In that paper, $\{\mathfrak{e}_\a\}$ is shown to be an eigenbasis of a particular canonical Hopf map $\Theta \colon \QSym \to \QSym.$ In Section~\ref{sec:even-odd}, we exhibit a family of QPS bases (including $\{\mathfrak{e}_\a\}$) with the same property; we also study the quasisymmetric infinitesimal characters of these bases.
\end{example}

\begin{example}
    In Theorem~\ref{thm:ordered-partition}, let the ordered partition of $\Zpos$ be the reverse of the usual ordering on positive integers: $\dotsm \prec \{3\} \prec \{2\} \prec \{1\}.$ For each set $\{n\}$ in the partition, we again take the shuffle character $f_{\{n\}}(\a) = \frac{1}{\ell(\a)!}.$ With these choices, one can check that we obtain the shuffle character \[
        f(\a) = \begin{cases}
            \frac{1}{\aut(\a)} & \text{if } \a \text{ has weakly decreasing parts}, \\
            0 & \text{otherwise}.
        \end{cases}
    \] Because $f$ is normalized, the corresponding quasisymmetric power sum is \[
        \mathfrak{p}_\a = \aut(\a) \sum_{\a \le_{\text{wd}} \b} \frac{1}{\aut(\a, \b)} M_\b,
    \] where $\a \le_{\text{wd}} \b$ if $\a \le \b$ and every $\a^{(i)} \vDash \b_i$ has weakly decreasing parts. These are the \emph{combinatorial quasisymmetric power sums}, first constructed by Alinaeifard, Wang, and van Willigenburg~\cite{awvw} using $P$-partitions, and also studied by Lazzeroni~\cite{lazzeroni}.\footnote{
        While this formula for $\mathfrak{p}_\a$ does not appear explicitly in either of these sources, it can be derived starting from \cite[Theorem 5.8]{awvw} and \cite[\S 3, Definition 2]{lazzeroni}. Alternatively, the results of this paper are sufficient to show that the basis we have constructed must agree with the ones found in these sources. First, one can check that the bases found in these sources have shuffle characters with the same support as ours. Then the desired conclusion follows from an analysis using Remark~\ref{rmk:small-support}, or by using Theorem~\ref{thm:integral-qps} below.
    } As was established in \cite[\S 4.2, Theorem 4]{lazzeroni}, the dual basis of $\{\mathfrak{p}_\a\}$ is (up to scaling) the so-called ``Zassenhaus basis'' of $\NSym,$ defined in \cite[Definition 5.26]{krob-leclerc-thibon}.

    The combinatorial quasisymmetric power sums, unlike the others we have discussed, have nonnegative \emph{integer} coefficients when expanded into the monomial basis $\{M_\a\}.$ Indeed, one can check that $\frac{\aut(\a)}{\aut(\a,\b)}$ is a product of multinomial coefficients \begin{equation} \label{eq:aut-quotient}
            \frac{\aut(\a)}{\aut(\a,\b)} = \prod_{i \ge 1} \binom{m_i(\a)}{m_i(\a^{(1)}), \dots, m_i(\a^{(\ell(\b))})}.
    \end{equation} In Section~\ref{sec:qps-properties}, we use our characterization of quasisymmetric power sums to determine \emph{all} QPS bases with this integrality property.
\end{example}

\begin{remark} \label{rmk:reverse-combinatorial-qps}
    The authors of \cite{awvw} also construct a basis of ``reverse combinatorial power sums'' $\{\mathfrak{p}_\a^r\}$ that has the same integrality property. Using the definitions provided in that paper, one can check that for $\a, \b \in \Comp_n,$ the coefficient of $M_{\tiny \rev{\b}}$ in $\mathfrak{p}_{\tiny \rev{\a}}^r$ is equal to the coefficient of $M_\b$ in $\mathfrak{p}_\a.$ Hence, the basis $\{\mathfrak{p}_\a^r\}$ corresponds to the normalized shuffle character \[
        f(\a) = \begin{cases}
            \frac{1}{\aut(\a)} & \text{if } \a \text{ has weakly } \textit{increasing} \text{ parts}, \\
            0 & \text{otherwise}.
        \end{cases}
    \] In other words, it can be obtained by the ordered partition construction just as above, but instead using the partition $\{1\} \prec \{2\} \prec \{3\} \prec \dotsm.$
\end{remark}

\section{Properties of quasisymmetric power sums} \label{sec:qps-properties}

In this section, we use the framework we have developed to derive additional properties of the quasisymmetric power sum bases of Section~\ref{sec:shuffle-qps}.

\subsection{The type I basis} \label{sec:typeI}

Using the type I quasisymmetric power sums $\{\Psi_\a\}$, as well as some results from \cite{liuweselcouch}, we can complete the picture of Example~\ref{ex:infinitorial-morphism}. We reuse the notations $\mathcal{P},$ $\zeta,$ $\xi,$ and $\eta.$ Let \[
    \psi_\a = \f{\Psi_\a}{z_\a} = \sum_{\b \ge \a} \f{1}{\pi(\a, \b)} M_\b.
\] Note that $\{\psi_\a\}$ is a shuffle basis, since $\psi_\a = \f{1}{\p(\a)} \cdot \f{1}{\aut(\a)} \Psi_\a$ and $\{\f{1}{\aut(\a)} \Psi_\a\}$ is a (normalized) shuffle basis. The key fact that relates the type I basis to Example~\ref{ex:infinitorial-morphism} is \cite[Lemma 3.3]{liuweselcouch}, which gives \[
    \eta(\psi_\a) = \begin{cases}
        1 & \text{if } \ell(\a) = 1, \\
        0 & \text{otherwise}.
    \end{cases}
\] Thus, if $\ph \colon \Sh \to \QSym$ is the isomorphism given by $\ph(x_\a) = \psi_\a,$ then $\eta \circ \ph = \xi_S.$ Recall from Example~\ref{ex:infinitorial-morphism} that $\eta(K_P(\mathbf{x})) = \xi(P),$ and that $P \mapsto K_P(\mathbf{x})$ is the unique morphism of combinatorial Hopf algebras $(\mathcal{P}, \zeta) \to (\QSym, \zeta_Q).$ Putting all this together, we conclude that $\zeta$ and $\xi$ correspond to each other under the bijection $\bX(\mathcal{P}) \to \Xi(\mathcal{P})$ induced by $\ph,$ as shown below.

\[\begin{tikzcd}
    \bk && {\mathcal{P}} & {} & \bk \\
    \\
    & \Sh && \QSym
    \arrow["\zeta", from=1-3, to=1-5]
    \arrow["\xi"', from=1-3, to=1-1]
    \arrow[from=1-3, to=3-2]
    \arrow["{P \mapsto K_P(\mathbf{x})}"{description}, from=1-3, to=3-4]
    \arrow["\varphi"', from=3-2, to=3-4]
    \arrow["{\zeta_Q}"', from=3-4, to=1-5]
    \arrow["{\xi_S}", from=3-2, to=1-1]
    \arrow["\eta"{description, pos=0.7}, dashed, from=3-4, to=1-1]
\end{tikzcd}\]

We can now derive the quasisymmetric infinitesimal character $g$ of the type I basis. Indeed, suppose that $M_\a = \sum_{\b \ge \a} g(\a,\b) \f{1}{\aut(\b)} \Psi_\b = \sum_{\b \ge \a} g(\a,\b) \p(\b) \psi_\b.$ Applying $\eta$ gives \[
    (-1)^{\ell(\a)-1} \lp(\a) = \sum_{\b \ge \a} g(\a,\b) \p(\b) \, \eta(\psi_\b) = g(\a)\cdot |\a|
\] so we arrive at \[g(\a) = (-1)^{\ell(\a)-1} \f{\lp(\a)}{|\a|}.\] (One can also verify this formula by showing that \eqref{eq:fg-equivalence} holds, either as given or with the roles of $f$ and $g$ switched.)

\subsection{The type II basis} \label{sec:typeII}

We now show how to recover the exponential map $\exp \colon \Xi(H) \to \bX(H)$ using the type II quasisymmetric power sums. As a consequence, we also rederive the quasisymmetric infinitesimal character of this basis.

Let $X_\a = \f{1}{\aut(\a)} \Phi_\a = \sum_{\b \ge \a} \f{1}{\ell^!(\a,\b)} M_\b$ be the normalized shuffle basis element corresponding to the type II quasisymmetric power sums, and denote by $\ph \colon \Sh \to \QSym$ the isomorphism $\ph(x_\a) = X_\a.$ Let $H$ be a connected graded Hopf algebra, choose $\xi \in \Xi(H),$ and let $\zeta \in \bX(H)$ be the character that corresponds to $\xi$ under the bijection $\Xi(H) \to \bX(H)$ induced by $\ph.$ Then, Theorem~\ref{thm:Sh-universal}(iii) and the results of Section~\ref{sec:bijections-char-infchar} imply that \[
    \zeta(h) = \zeta_Q \circ \ph\left(\sum_{\a \comp n} \xi^{\tensor \ell(\a)} \Delta_\a(h) x_\a \right)
\] for $h \in H_n.$ Now $\zeta_Q \circ \ph(x_\a) = \zeta_Q(X_\a) = \f{1}{\ell(\a)!},$ so we get \[
    \zeta(h)
    = \sum_{\a \comp n} \xi^{\tensor \ell(\a)} \Delta_\a(h) \f{1}{\ell(\a)!}
    = \sum_{m \ge 0} \f{1}{m!} \sum_{\substack{\a \comp n \\ \ell(\a) = m}} \xi^{\tensor m} \Delta_\a(h)
    = \sum_{m \ge 0} \f{1}{m!} \xi^{*m}(h),
\] where $\xi^{*m}$ denotes the $m$th convolutional power of $\xi.$ (Here we used the fact that $\xi$ annihilates elements in degree zero, so that $\sum_{\a \comp n, \, \ell(\a) = m} \xi^{\tensor m} \Delta_\a(h) = \xi^{\tensor m} \Delta^{m-1}(h).$) In other words, in the algebra of linear maps $H \to \bk$ with the convolution product, we have $\zeta = \exp \xi.$ Thus $\ph$ induces the exponential map $\exp \colon \Xi(H) \to \bX(H).$

Using this observation, we can also rederive the quasisymmetric infinitesimal character $g$ of the type II basis. Suppose we instead fix a character $\zeta \in \bX(H).$ By a similar computation as above, the infinitesimal character $\xi$ that corresponds to $\zeta$ is given by \begin{equation} \label{eq:typeII-g-computation}
    \xi(h) = \sum_{\a \comp n} \zeta^{\tensor \ell(\a)} \Delta_\a(h) g(\a)
\end{equation} for $h \in H_n$ and $n>0.$ But $\zeta = \exp \xi,$ so $\xi = \log \zeta$, that is, \[\begin{aligned}
    \xi(h) &= \sum_{m \ge 1} \f{(-1)^{m-1}}{m} (\zeta - \eps)^{*m}(h) \\
    &= \sum_{m \ge 1} \f{(-1)^{m-1}}{m} \sum_{\substack{\a \comp n \\ \ell(\a) = m}} (\zeta - \eps)^{\tensor \ell(\a)} \Delta_\a(h) \\
    &= \sum_{\a \comp n} \f{(-1)^{\ell(\a)-1}}{\ell(\a)} \zeta^{\tensor \ell(\a)} \Delta_\a(h),
\end{aligned}\] for $h \in H_n$ and $n > 0.$ (Here we used the fact that $\zeta - \eps$ annihilates elements in degree zero and $\eps$ annihilates elements in positive degree.) Now \eqref{eq:typeII-g-computation} uniquely determines $g$ (for example, when $(H, \zeta) = (\QSym, \zeta_Q),$ one can check that $\xi = g^\QSym$), so we get \[
    g(\a) = \f{(-1)^{\ell(\a)-1}}{\ell(\a)}.
\] (Again, this formula for $g(\a)$ can also be verified directly by means of \eqref{eq:fg-equivalence}.)

\subsection{The even-odd basis} \label{sec:even-odd}

In \cite{al}, the even-odd basis $\{\mathfrak{e}_\a\}$ was proved to be an eigenbasis for a certain canonical Hopf map $\Theta \colon \QSym \to \QSym.$ Here, we generalize this result to a larger family of QPS bases and also consider their quasisymmetric infinitesimal characters.

Let $\nu_Q$ be the character of $\QSym$ defined by $\nu_Q = \overline{\zeta}_Q^{-1} * \zeta_Q,$ where $\overline{\zeta}_Q(M_\a) = (-1)^{|\a|} \zeta_Q(M_\a)$ and $\overline{\zeta}_Q^{-1}$ is its convolutional inverse. This is the canonical \emph{odd character} associated to $\zeta_Q,$ as in \cite[Example 1.3]{abs}. We will need the following formula for $\nu_Q,$ from \cite[(4.8)]{abs}: \begin{equation} \label{eq:nuQ_Malpha}
    \nu_Q(M_\a) = \begin{cases}
        1 & \text{if } \a = \empty, \\
        2 (-1)^{|\a| + \ell(\a)} & \text{if } \lp(\a) \text{ is odd}, \\
        0 & \text{otherwise.}
    \end{cases}
\end{equation} The map $\Theta$ is defined to be the unique morphism of combinatorial Hopf algebras \[
    \Theta \colon (\QSym, \nu_Q) \to (\QSym, \zeta_Q).
\] The image of $\Theta$ is the canonical odd subalgebra of $(\QSym, \zeta_Q)$, as in \cite[\S 6]{abs}.

Recall from Example~\ref{ex:even-odd-basis} that the basis $\{\mathfrak{e}_\a\}$ arises from the ordered partition construction with the partition $\mathbb{E} \prec \mathbb{O}$ and with $f_\mathbb{E}(\a) = f_\mathbb{O}(\a) = \f{1}{\ell(\a)!}.$ Here we consider a larger class of shuffle bases (equivalently, quasisymmetric power sums) where $f_\mathbb{E}$ can be any nonsingular shuffle character. In other words, we consider shuffle bases $\{X_\a\}$ of the form \begin{equation} \label{equation-Salpha}
    X_\a = \sum_{\b \ge \a} f(\a, \b) M_\b
\end{equation} where \begin{equation} \label{equation-Salpha-f}
    f(\a) = \begin{cases}
         f_\mathbb{E}(\a|_\mathbb{E}) \frac{1}{\odd(\a)!} & \text{if } \a = \a|_\mathbb{E} \, \a|_\mathbb{O}, \\
        0 & \text{otherwise.}
    \end{cases}
\end{equation}
We then have the following generalization of \cite[Theorem 3.8]{al}.

\begin{thm}
    The basis $\{X_\a\}$ defined by \eqref{equation-Salpha} and \eqref{equation-Salpha-f} is an eigenbasis for $\Theta,$ with \[
        \Theta(X_\a) = \begin{cases}
            2^{\ell(\a)} X_\a & \text{if } \a \text{ is odd}, \\
            0 & \text{otherwise.}
        \end{cases}
    \]
\end{thm}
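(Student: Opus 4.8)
The plan is to transport the statement to the shuffle algebra. Write $\ph\colon\Sh\to\QSym$ for the Hopf isomorphism corresponding to the shuffle basis $\{X_\a\}$, so $\ph(x_\a)=X_\a$, and let $D\colon\Sh\to\Sh$ be the linear map with $D(x_\a)=2^{\ell(\a)}x_\a$ when $\a$ is odd and $D(x_\a)=0$ otherwise; the assertion of the theorem is precisely that $\Theta\circ\ph=\ph\circ D$. First I would check that $D$ is a graded Hopf endomorphism of $\Sh$: multiplicativity holds because a shuffle of $\a$ and $\b$ is odd if and only if both $\a$ and $\b$ are odd, in which case the lengths add, and compatibility with deconcatenation is analogous since every factor of an odd composition is odd. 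Granting this, $\Theta\circ\ph$ and $\ph\circ D$ are both graded Hopf maps $\Sh\to\QSym$, so by the universal property of $\QSym$ (Proposition~\ref{prop:abs-universal}) they agree as soon as their composites with $\zeta_Q$ do. Since $\zeta_Q\circ\Theta=\nu_Q$ and $\zeta_Q(X_\a)=f(\a)$ by \eqref{eq:Sh-equivalence-fSh}, the whole theorem reduces to the scalar identity
\[
    \nu_Q(X_\a)=\begin{cases}2^{\ell(\a)}f(\a)&\text{if }\a\text{ is odd},\\ 0&\text{otherwise.}\end{cases}
\]

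To evaluate $\nu_Q(X_\a)$ I would avoid the antipode of $\QSym$ and instead pull the defining relation $\nu_Q=\overline{\zeta}_Q^{-1}*\zeta_Q$ back along $\ph$. As $\ph$ is a Hopf isomorphism and $\overline{\zeta}_Q\circ\ph$ is again a character of $\Sh$, we get $\nu_Q\circ\ph=(\overline{\zeta}_Q\circ\ph)^{-1}*(\zeta_Q\circ\ph)$, where $\zeta_Q\circ\ph=f^\Sh$ and $(\overline{\zeta}_Q\circ\ph)(x_\a)=(-1)^{|\a|}f(\a)$ by the same one-term computation that gives \eqref{eq:Sh-equivalence-fSh}. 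Inverting $\overline{\zeta}_Q\circ\ph$ using the antipode $S_\Sh(x_\a)=(-1)^{\ell(\a)}x_{\rev{\a}}$ and then convolving via deconcatenation yields
\[
    \nu_Q(X_\a)=\sum_{\sigma\tau=\a}(-1)^{\ell(\sigma)+|\sigma|}\,f(\rev{\sigma})\,f(\tau),
\]
with the convention $f(\empty)=1$ and the sum over all deconcatenations $\a=\sigma\tau$.

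It remains to read off this sum from the explicit shuffle character \eqref{equation-Salpha-f}. A term is nonzero only when both $\rev{\sigma}$ and $\tau$ respect $\mathbb{E}\prec\mathbb{O}$, i.e.\ $\sigma$ lists its odd parts before its even parts and $\tau$ lists its even parts before its odd parts; hence $\a$ must be a concatenation (odds)(evens)(odds), so in particular its even parts form one contiguous block. If $\a$ is odd, every deconcatenation contributes, all signs equal $+1$ (because $|\sigma|\equiv\ell(\sigma)\pmod 2$), and $f(\rev{\sigma})f(\tau)=\tfrac1{\ell(\sigma)!\,\ell(\tau)!}$, so the binomial theorem gives $\nu_Q(X_\a)=2^{\ell(\a)}/\ell(\a)!=2^{\ell(\a)}f(\a)$. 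If $\a$ is not odd, then either its even parts are not contiguous, in which case no term contributes, or $\a=O_1EO_2$ with $O_1,O_2$ odd and $E=(b_1,\dots,b_q)$, $q\ge 1$, the block of even parts; then the contributing deconcatenations are exactly $\sigma=O_1(b_1,\dots,b_j)$ for $0\le j\le q$, the $j$th of which carries sign $(-1)^j$, and one gets
\[
    \nu_Q(X_\a)=\frac{1}{\ell(O_1)!\,\ell(O_2)!}\sum_{j=0}^{q}(-1)^j\,f_{\mathbb{E}}\bigl((b_j,\dots,b_1)\bigr)\,f_{\mathbb{E}}\bigl((b_{j+1},\dots,b_q)\bigr).
\]
Using $S_\Sh$ once more, the remaining alternating sum is $\bigl((f_{\mathbb{E}}^\Sh)^{-1}*f_{\mathbb{E}}^\Sh\bigr)(x_{(b_1,\dots,b_q)})=\eps_\Sh(x_{(b_1,\dots,b_q)})=0$, since $f_{\mathbb{E}}^\Sh$ is a character and $(b_1,\dots,b_q)\neq\empty$. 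This completes the identity, and hence the theorem. The step I expect to be the main obstacle is this last one: correctly identifying which deconcatenations survive the support constraint on $f$ and recognizing the residual alternating sum as an instance of $\chi^{-1}*\chi=\eps$ for the character $\chi=f_{\mathbb{E}}^\Sh$; everything else is formal bookkeeping with the two universal properties and the antipode of $\Sh$.
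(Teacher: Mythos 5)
Your proposal is correct and follows the same skeleton as the paper's proof: both reduce the theorem, via the universal property of $\QSym$, to the single scalar identity $\nu_Q(X_\a)=2^{\ell(\a)}f(\a)$ for odd $\a$ and $\nu_Q(X_\a)=0$ otherwise, and both compute $\nu_Q(X_\a)$ as the signed deconcatenation sum $\sum_{\sigma\tau=\a}(-1)^{\ell(\sigma)+|\sigma|}f(\rev{\sigma})f(\tau)$ (your sign $(-1)^{\ell(\sigma)+|\sigma|}$ agrees with the paper's $(-1)^{\even(\sigma)}$), followed by the same support analysis. You differ in two steps, both legitimately. First, you package the eigenvalue statement as $\Theta\circ\ph=\ph\circ D$ for a diagonal Hopf endomorphism $D$ of $\Sh$ and invoke uniqueness from Proposition~\ref{prop:abs-universal}, whereas the paper simply expands $\Theta(X_\a)$ using the explicit formula of Proposition~\ref{prop:abs-universal}(iii); these are interchangeable. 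Second, and more substantively, in the case $\a=O_1EO_2$ with $E\neq\empty$ the paper factors the sum as $\frac{1}{\ell(O_1)!\,\ell(O_2)!}\,\nu_Q(X_E)$ and kills it using the explicit evaluation \eqref{eq:nuQ_Malpha} of $\nu_Q$ on the monomial basis (every coarsening of an even composition has even last part), while you recognize the surviving alternating sum as $\bigl((f_{\mathbb{E}}^\Sh)^{-1}*f_{\mathbb{E}}^\Sh\bigr)(x_E)=\eps_\Sh(x_E)=0$, using only that $f_{\mathbb{E}}$ is a shuffle character together with the antipode of $\Sh$. Your route buys independence from \eqref{eq:nuQ_Malpha} and stays entirely inside the shuffle-character formalism; the paper's route buys a shorter computation by reusing the known formula for $\nu_Q(M_\a)$. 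Your identification of the contributing deconcatenations (cuts within the even block, the $j$th carrying sign $(-1)^j$) and the binomial evaluation $\sum_{i+j=\ell(\a)}\frac{1}{i!\,j!}=\frac{2^{\ell(\a)}}{\ell(\a)!}$ in the odd case both check out.
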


\begin{proof}
    The main claim is that \begin{equation} \label{eq:nuQ_Salpha}
        \nu_Q(X_\a) = \begin{cases}
            2^{\ell(\a)} f(\a) & \text{if } \a \text{ is odd}, \\
            0 & \text{otherwise.}
        \end{cases}
    \end{equation} We first show how this implies the theorem. By Proposition~\ref{prop:abs-universal}, we have \[
        \Theta(X_\a)
        = \sum_{\b \comp n} \left(\nu_Q^{\tensor \ell(\b)} \Delta_\b(X_\a)\right) M_\b = \sum_{\b \ge \a} \nu_Q(X_{\a^{(1)}}) \dotsm \nu_Q(X_{\a^{(\ell(\b))}}) M_\b.
    \] If $\a$ is not odd, then for every $\b \ge \a,$ there exists some $\a^{(i)}$ which is not odd, and so every term of this sum vanishes. If instead $\a$ is odd, then so is every $\a^{(i)}$ and therefore \[
        \Theta(X_\a)
        = \sum_{\b \ge \a} \left(\prod_{i=1}^{\ell(\b)} 2^{\ell(\a^{(i)})} f(\a^{(i)}) \right) M_\b
        = 2^{\ell(\a)} \sum_{\b \ge \a} f(\a, \b) M_\b
        = 2^{\ell(\a)} X_\a,
    \] as desired.
    
    It remains to establish \eqref{eq:nuQ_Salpha}. For that, we use $\nu_Q = \overline{\zeta}_Q^{-1} * \zeta_Q$ to write \[
        \nu_Q(X_\a)
        = \sum_{\b\y = \a} \overline{\zeta}_Q^{-1}(X_\b) \, \zeta_Q(X_\y)
        = \sum_{\b\y = \a} \overline{\zeta}_Q \circ S_{\QSym}(X_\b) \, \zeta_Q(X_\y).
    \]
                    Because $\{X_\a\}$ is a shuffle basis, we can apply \eqref{eq:Sh-antipode} to get $S_\QSym(X_\b) = (-1)^{\ell(\b)} X_{\tiny \rev{\b}}.$ Therefore, we have $\overline{\zeta}_Q \circ S_\QSym(X_\b) = (-1)^{|\b|+\ell(\b)} \zeta_Q(X_{\tiny \rev{\b}}) = (-1)^{\even(\b)} f(\rev{\b}),$ so \begin{equation} \label{eq:eigenbasis-computation}
        \nu_Q(X_\a)
        = \sum_{\b\y = \a} (-1)^{\even(\b)} f(\rev{\b}) f(\y).
    \end{equation} Observe that $f(\y) = 0$ unless $\y = \y|_\mathbb{E} \y|_\mathbb{O},$ and $f(\rev{\b}) = 0$ unless $\b = \b|_\mathbb{O} \b|_\mathbb{E}.$ Since $\a = \b\y,$ it follows that $\nu_Q(X_\a) = 0$ unless $\a$ has the form $\a = \a^{(1)} \a^{(2)} \a^{(3)}$ for $\a^{(1)}, \a^{(3)}$ odd and $\a^{(2)}$ even.

    We now show that if $\a^{(2)} \neq \empty$, then $\nu_Q(X_\a) = 0.$ In this case, the nonzero terms in the above sum have $\b = \a^{(1)} \b'$ and $\y = \y' \a^{(3)},$ where $\b'\y' = \a^{(2)}.$ Therefore we have \[ \begin{aligned}
        \nu_Q(X_\a)
        = \sum_{\b'\y' = \a^{(2)}} (-1)^{\even(\a^{(1)} \b')} f(\rev{\b'}\rev{\a^{(1)}}) f(\y'\a^{(3)})
        &= \sum_{\b'\y' = \a^{(2)}} (-1)^{\even(\b')} f(\rev{\b'})\f{1}{\ell(\a^{(1)})!} f(\y') \f{1}{\ell(\a^{(3)})!} \\
        &= \f{1}{\ell(\a^{(1)})! \, \ell(\a^{(3)})!} \nu_Q(X_{\a^{(2)}}).
    \end{aligned} \] Since $\a^{(2)}$ is even, every composition $\delta \ge \a^{(2)}$ has $\lp(\delta)$ even, so it follows from \eqref{eq:nuQ_Malpha} that $\nu_Q(X_{\a^{(2)}}) = \sum_{\delta \ge \a^{(2)}} f(\a^{(2)},\delta) \nu_Q(M_\delta) = 0.$ Thus, $\nu_Q(X_\a) = 0$ if $\a^{(2)} \neq \empty,$ as desired.

    The only remaining case is when $\a^{(2)} = \empty,$ i.e., $\a$ is odd. In this case, \eqref{eq:eigenbasis-computation} becomes \[
        \nu_Q(X_\a)
        = \sum_{\b\y = \a} \f{1}{\ell(\b)! \ell(\y)!}
        = \sum_{i=0}^{\ell(\a)} \f{1}{i! (\ell(\a)-i)!}
        = \f{2^{\ell(\a)}}{\ell(\a)!}
        = 2^{\ell(\a)} f(\a),
    \] as claimed. This completes the proof.
\end{proof}

Next, we consider the quasisymmetric infinitesimal character $g$ that corresponds to a shuffle basis $X_\a$ given by \eqref{equation-Salpha} and \eqref{equation-Salpha-f}. When $\a$ is a composition of odd size, there is a simple formula for $g(\a)$ that does not depend on $f_\mathbb{E},$ as we now establish.

\begin{thm}
    Let $g \colon \Compne \to \bk$ be the map such that \begin{equation} \label{eq:equation-Salpha-g}
        M_\a = \sum_{\b \ge \a} g(\a, \b) X_\b,
    \end{equation} where $\{X_\a\}$ is given by \eqref{equation-Salpha} and \eqref{equation-Salpha-f}. For $|\a|$ odd, \begin{equation} \label{eq:evenodd-gfunction-claim}
        g(\a) = \begin{cases}
            \displaystyle\f{(-1)^{\ell(\a)-1}}{\odd(\a)} & \text{if } \lp(\a) \text{ is odd}, \\
            0 & \text{otherwise.}
        \end{cases}
    \end{equation}
\end{thm}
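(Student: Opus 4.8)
The plan is to prove \eqref{eq:evenodd-gfunction-claim} by strong induction on $n = |\a|$ over odd values of $n$, using the odd character $\nu_Q$ as a probe. Applying $\nu_Q$ to the expansion \eqref{eq:equation-Salpha-g} and using \eqref{eq:nuQ_Salpha} together with the fact that $f(\b) = \frac{1}{\ell(\b)!}$ whenever $\b$ is odd (since then $\b|_\mathbb{E} = \empty$ and $f_\mathbb{E}(\empty) = 1$ in \eqref{equation-Salpha-f}, which is also why the answer will not involve $f_\mathbb{E}$), one gets
\[
    \nu_Q(M_\a) = \sum_{\b \ge \a,\ \b\text{ odd}} g(\a,\b)\, \frac{2^{\ell(\b)}}{\ell(\b)!}.
\]
Since $n$ is odd, the composition $(n)$ occurs among the $\b$'s, contributing exactly $2\,g(\a)$; every other term has $\ell(\b) \ge 2$, so each block $\a^{(i)} \comp \b_i$ has odd size $\b_i < n$, and the inductive hypothesis applies to each $g(\a^{(i)})$. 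The strategy is then to evaluate the right-hand side using the inductive formula and solve for $g(\a)$.

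If $\lp(\a)$ is even, then $\nu_Q(M_\a) = 0$ by \eqref{eq:nuQ_Malpha}; moreover, for every $\b$ with $\ell(\b) \ge 2$ the last block $\a^{(\ell(\b))}$ ends with the part $\lp(\a)$, which is even, so $g(\a^{(\ell(\b))}) = 0$ by the inductive hypothesis and hence $g(\a,\b) = 0$. This leaves $0 = 2\,g(\a)$, as desired.

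If $\lp(\a)$ is odd, write $r = \odd(\a)$ (note $r$ is odd, as $|\a|$ is odd) and let $j_1 < \dots < j_r = \ell(\a)$ be the positions of the odd parts of $\a$. A term indexed by $\b$ is nonzero only if every block ends at one of the $j_\bullet$ (so that $\lp(\a^{(i)})$ is odd) and every block contains an odd number of odd parts (so that $\b$ is odd); such $\b$ with $\ell(\b) = k$ are in bijection with compositions $(d_1, \dots, d_k) \comp r$ into odd parts, via $d_i = \odd(\a^{(i)})$. Since $\sum_i \ell(\a^{(i)}) = \ell(\a)$, the inductive hypothesis gives $g(\a,\b) = \prod_i \frac{(-1)^{\ell(\a^{(i)})-1}}{d_i} = \frac{(-1)^{\ell(\a)-k}}{d_1 \cdots d_k}$, so that
\[
    \nu_Q(M_\a) - 2\,g(\a) = (-1)^{\ell(\a)} \sum_{k \ge 2} \frac{(-2)^k}{k!} \sum_{(d_1,\dots,d_k)\comp r,\ d_i\text{ odd}} \frac{1}{d_1 \cdots d_k}.
\]
Setting $L(t) = \sum_{m \ge 1,\ m\text{ odd}} \frac{t^m}{m} = \operatorname{arctanh}(t) = \frac12\log\frac{1+t}{1-t}$, the inner sum equals $[t^r]L(t)^k$, so the double sum is $[t^r]\!\left(e^{-2L(t)}-1+2L(t)\right) = [t^r]\frac{1-t}{1+t} + \frac2r = -2 + \frac2r$, using $e^{-2\operatorname{arctanh}(t)} = \frac{1-t}{1+t}$ and that $r$ is odd (so $[t^r]\frac{1-t}{1+t} = -2$ and $[t^r]L(t) = \frac1r$). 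Plugging in $\nu_Q(M_\a) = 2(-1)^{n+\ell(\a)} = 2(-1)^{\ell(\a)-1}$ (by \eqref{eq:nuQ_Malpha}) and solving yields $g(\a) = \frac{(-1)^{\ell(\a)-1}}{\odd(\a)}$. The base case $n = 1$ is trivial: the only composition is $(1)$, and both the claim and the identity above give $g((1)) = 1$.

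The main obstacle is the combinatorics in the $\lp(\a)$-odd case: pinning down exactly which refinements $\b$ survive, reorganizing their contributions in terms of compositions of $\odd(\a)$ into odd parts while tracking $\ell(\a^{(i)})$ and $\odd(\a^{(i)})$ correctly, and then recognizing the resulting sum through the generating-function identity $e^{-2\operatorname{arctanh}(t)} = \frac{1-t}{1+t}$, which is what makes the answer collapse to $(-1)^{\ell(\a)-1}/\odd(\a)$. Everything else — the even case and the base case — is routine.
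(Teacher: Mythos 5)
Your proof is correct and takes essentially the same route as the paper: both apply the odd character $\nu_Q$ to the expansion of $M_\a$, note that the resulting relation determines $g$ on compositions of odd size (you phrase this as strong induction, the paper as uniqueness plus verification of the candidate formula), reduce the surviving refinements to compositions of $\odd(\a)$ into odd parts, and evaluate the same generating function identity $e^{-2\operatorname{arctanh}(t)} = \frac{1-t}{1+t}$. The only cosmetic difference is that the paper normalizes to $\a = 1^{\odd(\a)}$ before extracting the coefficient, while you subtract the $k=0,1$ terms and solve for $g(\a)$ directly.
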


\begin{proof}
    Suppose that $\a$ is a composition with $n = |\a|$ odd. Applying $\nu_Q$ to both sides of \eqref{eq:equation-Salpha-g}, we have by \eqref{eq:nuQ_Malpha} and \eqref{eq:nuQ_Salpha} that \begin{equation} \label{eq:Salpha-g-proof}
        \sum_{\text{odd } \b \ge \a} g(\a,\b) 2^{\ell(\b)} \f{1}{\ell(\b)!}
        = \begin{cases}
            2 (-1)^{|\a| + \ell(\a)} & \text{if } \lp(\a) \text{ is odd}, \\
            0 & \text{otherwise.}
        \end{cases}
    \end{equation} Since $n$ is odd, the left-hand side has a term $2g(\a),$ while the other terms depend only on the values $g(\y)$ where $|\y|$ is odd and less than $n.$ Therefore, \eqref{eq:Salpha-g-proof} uniquely determines $g(\a)$ for all $\a$ of odd size. It follows that to show that \eqref{eq:evenodd-gfunction-claim} holds, we need only show that it makes \eqref{eq:Salpha-g-proof} true. In other words, it suffices to prove that \[
        \sum_{
            \substack{
                \text{odd } \b \ge \a \\
                \text{all } \lp(\a^{(i)}) \text{ odd}
            }
        } \f{(-1)^{\ell(\a)-\ell(\b)}}{\odd(\a,\b)} 2^{\ell(\b)} \f{1}{\ell(\b)!}
        = \begin{cases}
            2 (-1)^{|\a| + \ell(\a)} & \text{if } \lp(\a) \text{ is odd}, \\
            0 & \text{otherwise}
        \end{cases}
    \] whenever $|\a|$ is odd. The left-hand side is clearly zero if $\lp(\a)$ is even, so assume that $\lp(\a)$ is odd. Then we want to show \[
        \sum_{
            \substack{
                \text{odd } \b \ge \a \\
                \text{all } \lp(\a^{(i)}) \text{ odd}
            }
        } \f{(-2)^{\ell(\b)}}{\odd(\a,\b)} \f{1}{\ell(\b)!}
        = 2 (-1)^{|\a|}.
    \] 
    In order for $\lp(\a^{(i)})$ to be odd for all $i$, each even part of $\a$ must lie in the same $\a^{(i)}$ as the closest odd part to its right. In addition, the summand on the left-hand side only depends on $\ell(\b)$ and the numbers $\odd(\a^{(i)}),$ so it follows that the left-hand side only depends on $\odd(\a).$ Therefore, we may assume without loss of generality that $\a = 1^m,$ where $m = \odd(\a).$ Then the left-hand side becomes \[
        \sum_{\text{odd } \b \, \comp \, m} \f{(-2)^{\ell(\b)}}{\p(\b)} \f{1}{\ell(\b)!}.
    \] This sum is the coefficient of $x^m$ in the power series \[
        \exp \left( \sum_{k \text{ odd}} (-2) \f{x^k}{k} \right) = \exp(\log(1-x) - \log(1+x)) = \frac{1-x}{1+x},
    \] which is $2(-1)^m = 2(-1)^{|\a|},$ as desired.
\end{proof}

\subsection{The combinatorial basis}

Recall that the combinatorial quasisymmetric power sums $\{\mathfrak{p}_\a\}$ (as well as the reverse combinatorial basis $\{\mathfrak{p}_\a^r\}$ of Remark~\ref{rmk:reverse-combinatorial-qps}) have the special property that they expand into the monomial basis with nonnegative integer coefficients. We now determine all such QPS bases.

\begin{thm} \label{thm:integral-qps}
    Let $\{P_\a\}$ be a graded basis of QSym. The following are equivalent:
    \begin{enumerate}[(i)]
        \item $\{P_\a\}$ is a QPS basis, and $P_\a = \sum_\b c_{\a\b} M_\b$ for some integers $c_{\a\b} \ge 0.$
        \item There exists a total order $\prec$ on $\Zpos$ such that \[
            P_\a = \aut(\a) \sum_{\a \le' \b} \frac{1}{\aut(\a,\b)} M_\b,
        \] where $\a \le' \b$ if $\a \le \b$ and every $\a^{(i)}$ has weakly increasing parts with respect to $\prec.$
    \end{enumerate}
\end{thm}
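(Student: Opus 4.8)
The plan is to translate everything into the language of normalized shuffle characters. By the equivalence between QPS bases and normalized shuffle bases (the Proposition preceding Section~\ref{prefix-sum}) together with \eqref{eq:qps-shuffle-char}, a QPS basis $\{P_\a\}$ has the form $P_\a = \aut(\a)\sum_{\b\ge\a} f(\a,\b)\,M_\b$ for a unique normalized shuffle character $f$, and every normalized shuffle character arises this way; hence the coefficient $c_{\a\b}$ of $M_\b$ in $P_\a$ is $\aut(\a) f(\a,\b)$ for $\b\ge\a$ and $0$ otherwise. Using $\aut(\a)/\aut(\a,\b) = \prod_i\binom{m_i(\a)}{m_i(\a^{(1)}),\dots,m_i(\a^{(\ell(\b))})}$ from \eqref{eq:aut-quotient} --- a product of multinomial coefficients, hence a nonnegative integer --- we obtain $c_{\a\b} = \bigl(\prod_i\binom{m_i(\a)}{m_i(\a^{(1)}),\dots}\bigr)\prod_j \aut(\a^{(j)})f(\a^{(j)})$, so condition (i) is equivalent to $d_\gamma := \aut(\gamma)f(\gamma)\in\Zpos\cup\{0\}$ for all $\gamma$ (the forward direction being the case $\b=(|\a|)$). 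Moreover, condition (ii) says precisely that $f$ is the shuffle character obtained by applying the ordered partition construction (Theorem~\ref{thm:ordered-partition}) to the partition of $\Zpos$ into singletons totally ordered by $\prec$, with each $f_{\{n\}}$ the type II character $\gamma\mapsto 1/\ell(\gamma)!$; equivalently $f(\gamma)=1/\aut(\gamma)$ if $\gamma$ is weakly increasing with respect to $\prec$ and $f(\gamma)=0$ otherwise. Thus the theorem reduces to: a normalized shuffle character $f$ has $d_\gamma\in\Zpos\cup\{0\}$ for all $\gamma$ if and only if $f$ has this last form for some total order $\prec$ on $\Zpos$.

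The backward direction of the reduced statement is immediate: the ordered partition construction produces a shuffle character, it is visibly normalized, and $d_\gamma\in\{0,1\}$. For the forward direction, note first that $d_\gamma\in\Zpos\cup\{0\}$ forces $f(\gamma)\ge 0$. At length $2$, the identity $1 = f(a)f(b) = f(a,b)+f(b,a)$ for $a\ne b$, together with $f(a,b),f(b,a)\in\Zpos\cup\{0\}$, shows that exactly one of the two equals $1$; define $a\prec b$ iff $f(a,b)=1$. A short computation with the six length-$3$ shuffle identities $f(x)f(y,z) = \sum_{\epsilon\in(x)\sh(y,z)} f(\epsilon)$ --- repeatedly using that a vanishing sum of nonnegative terms forces each term to vanish --- shows that a cycle $a\prec b\prec c\prec a$ would yield $1=0$; hence $\prec$ is transitive, so a total order.

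The heart of the argument is to prove, by induction on $\ell(\gamma)$, that $f(\gamma)=0$ whenever $\gamma$ is not weakly increasing with respect to $\prec$. The crucial mechanism is that in a shuffle identity $f(\sigma)f(\rho)=\sum_{\epsilon\in\sigma\sh\rho} f(\epsilon)$ with all $f$-values nonnegative, $f(\sigma)=0$ forces $f(\epsilon)=0$ for every $\epsilon\in\sigma\sh\rho$; this lets the induction proceed even though the compositions on the right-hand side of a shuffle identity are not shorter. For $\gamma$ not weakly increasing with $\ell(\gamma)\ge 3$: if $(\gamma_2,\dots,\gamma_\ell)$ is not weakly increasing, apply the identity with $\sigma=(\gamma_2,\dots,\gamma_\ell)$ (so $f(\sigma)=0$ by induction) and $\rho=(\gamma_1)$, noting $\gamma\in\sigma\sh\rho$; otherwise the only descent is $\gamma_1\succ\gamma_2$, and then $(\gamma_1,\dots,\gamma_{\ell-1})$ is not weakly increasing, so apply the identity with $\sigma=(\gamma_1,\dots,\gamma_{\ell-1})$ and $\rho=(\gamma_\ell)$. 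The base case $\ell=2$ follows from the definition of $\prec$ and $f(a,b)+f(b,a)=1$. With this in hand, Remark~\ref{rmk:small-support} applied to the singleton ordered partition gives $f(\gamma)=\prod_n f\bigl((n^{m_n(\gamma)})\bigr)$ for $\prec$-weakly-increasing $\gamma$, and $f\bigl((n^k)\bigr)=1/k!$ follows from normalization and the shuffle identity $f(n)^k=k!\,f\bigl((n^k)\bigr)$; hence $f(\gamma)=\prod_n 1/m_n(\gamma)! = 1/\aut(\gamma)$, which establishes the forward direction.

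Finally, we assemble the pieces. For (i)$\Rightarrow$(ii): $f$ now has the claimed form, and substituting $f(\a,\b)=\prod_j f(\a^{(j)})$, which equals $1/\aut(\a,\b)$ when $\a\le'\b$ and $0$ otherwise, into $P_\a=\aut(\a)\sum_{\b\ge\a}f(\a,\b)M_\b$ gives exactly the formula in (ii). For (ii)$\Rightarrow$(i): the shuffle character $f$ read off from the formula in (ii) is the ordered partition character above, so $\{P_\a/\aut(\a)\}$ is a normalized shuffle basis and hence $\{P_\a\}$ is a QPS basis; and $c_{\a\b}=\aut(\a)/\aut(\a,\b)$ when $\a\le'\b$ (and $0$ otherwise) is a product of multinomial coefficients by \eqref{eq:aut-quotient}, hence a nonnegative integer. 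The main obstacle is the forward direction of the reduced statement --- pinning down that the support of $f$ consists only of $\prec$-weakly-increasing compositions --- and the key idea there is to exploit nonnegativity of the coefficients so that a vanishing sum of nonnegative shuffle terms forces each term to vanish, which is what makes the length induction close even though shuffle identities do not decrease length.
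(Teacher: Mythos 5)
Your proposal is correct and follows essentially the same route as the paper: reduce to normalized shuffle characters, observe that condition (i) is equivalent to $\aut(\a)f(\a)$ being a nonnegative integer for all $\a$ (sufficiency via \eqref{eq:aut-quotient}), define $\prec$ from the length-two values $f(pq)+f(qp)=1$, and exploit the fact that a vanishing sum of nonnegative shuffle terms forces each term to vanish to get transitivity, the support condition, and the values $1/\aut(\a)$. The only cosmetic differences are that the paper eliminates non-weakly-increasing compositions in one step by shuffling an inverted pair $qp$ against the remaining parts $\b$ (rather than your induction on length), and computes $f(\a)=1/\aut(\a)$ directly from $1=f(\a_1)\dotsm f(\a_{\ell(\a)})=\aut(\a)f(\a)$ instead of invoking Remark~\ref{rmk:small-support}.
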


\begin{proof}
    By the ordered partition construction (Theorem~\ref{thm:ordered-partition}), the formula in (ii) defines a quasisymmetric power sum basis $\{P_\a\},$ and \eqref{eq:aut-quotient} implies that the coefficients of $P_\a$ in the monomial basis are nonnegative integers. So, it remains to show that (i) implies (ii).
    
    Suppose that (i) holds. Let $f$ be the normalized shuffle character of $\{P_\a\},$ so that \begin{equation} \label{eq:nonneg-f}
        P_\a = \aut(\a) \sum_{\b \ge \a} f(\a, \b) M_\b.
    \end{equation} Then $\aut(\a) f(\a, \b)$ must be a nonnegative integer for all $\a \le \b$. Taking $\b = |\a|,$ we see that $\aut(\a) f(\a)$ must be a nonnegative integer for all $\a.$ This condition is also sufficient: if $\a \le \b,$ then \[
        \aut(\a) f(\a, \b) = \frac{\aut(\a)}{\aut(\a,\b)} \prod_{i=1}^{\ell(\a)} \aut(\a^{(i)}) f(\a^{(i)})
    \] which is a nonnegative integer again by \eqref{eq:aut-quotient}. Therefore, we must determine all shuffle characters $f$ such that $\aut(\a) f(\a)$ is always a nonnegative integer.
    
    Let $p$ and $q$ be distinct positive integers. Then, $f(pq) + f(qp) = f(p)f(q) = 1,$ where $pq$ and $qp$ denote compositions with two parts. Because $\aut(pq) = \aut(qp) = 1,$ both $f(pq)$ and $f(qp)$ must be nonnegative integers, so they must be $0$ and $1$ in some order. Define a relation $\prec$ on $\Zpos$ by declaring that $p \prec q$ if and only if $f(pq) = 1$ (and $f(qp) = 0$). We claim that $\prec$ is a total order (in particular, it is transitive) and that \[
        f(\a) = \begin{cases}
            \frac{1}{\aut(\a)} & \text{if } \a_1 \preceq \dotsm \preceq \a_{\ell(\a)}, \\
            0 & \text{otherwise}.
        \end{cases}
    \] This, combined with \eqref{eq:nonneg-f}, will establish (ii).

    We make the following observation. Suppose that $p \prec q,$ so $f(qp) = 0.$ For any composition $\b,$ we have \[
        0 = f(\b) f(qp) = \sum_{\a \in \b \sh qp} f(\a),
    \] which forces $f(\a) = 0$ for all $\a \in \b \sh qp$ (since the values of $f$ are nonnegative). It follows that $f(\a) = 0$ if $\a$ has some subsequence $qp$ where $p \prec q.$ From this, we can deduce that $\prec$ is transitive: if $a \prec b$ and $b \prec c$ but $c \prec a$, then \[
        1 = f(a) f(bc) = f(abc) + f(bac) + f(bca) = 0 + 0 + 0 = 0,
    \] a contradiction. Thus, $f(\a) = 0$ unless $\a_1 \preceq \dotsm \preceq \a_{\ell(\a)}.$ 

    Finally, suppose that the parts of $\a$ are weakly increasing with respect to $\prec$. Then, writing \[
        1 = f(\a_1) \dotsm f(\a_{\ell(\a)}) = \sum_{\y \in \a_1 \sh \dotsm \sh \a_{\ell(\a)}} f(\y),
    \] we see that the terms on the right-hand side are all zero except for $\aut(\a)$ occurrences of $f(\a).$ Therefore $f(\a) = \f{1}{\aut(\a)},$ as desired.
\end{proof}

\printbibliography

\end{document}